\DeclareMathOperator{\tr}{tr}
\newcommand{\p}{\frac{\partial}{\partial z^i}}
\newcommand{\om}{\omega}
\newcommand{\vp}{\varphi}
\newcommand{\pa}{\partial}
\newcommand{\bap}{\bar{\partial}}
\newcommand{\Om}{\Omega}
\newcommand{\al}{\alpha}
\newcommand{\tl}{\tilde}
\newcommand{\wtl}{\widetilde}
\newcommand{\ov}{\overline}
\newcommand{\La}{\Lambda}
\newcommand{\lt}{\left}
\newcommand{\ag}{\langle}
\newcommand{\rg}{\rangle}
\newcommand{\wg}{\wedge}
\newcommand{\bz}{\bar{z}}
\newcommand{\bj}{\bar{j}}
\newcommand{\bl}{\bar{l}}
\newcommand{\bq}{\bar{q}}
\newcommand{\so}{\sqrt{-1}}
\newcommand{\bk}{\bar{k}}
\newtheorem{thm}{Theorem}[section]
\newtheorem{cor}[thm]{Corollary}
\newtheorem{lem}[thm]{Lemma}
\newtheorem{prop}[thm]{Proposition}
\theoremstyle{definition}
\newtheorem{defn}[thm]{Definition}
\newtheorem{remk}[thm]{Remark}
\newtheorem{exam}[thm]{Example}
\numberwithin{equation}{section}
\begin{document}
\title{Chern-Ricci curvatures, holomorphic sectional curvature and Hermitian metrics}
\author{Haojie Chen, Lingling Chen, Xiaolan Nie}
\address{Department of Mathematics, Zhejiang Normal University, Jinhua Zhejiang, 321004, China}
\email{chj@zjnu.edu.cn,\ chenll0716@126.com, \ nie@zjnu.edu.cn }
\date{}
\maketitle

\begin{abstract}
We present some formulae related to the Chern-Ricci curvatures and scalar curvatures of special Hermitian metrics. We prove that a compact locally conformal K\"{a}hler manifold with constant nonpositive holomorphic sectional curvature is K\"{a}hler.  We also give examples of complete non-K\"{a}hler metrics with pointwise negative constant but not globally constant holomorphic sectional curvature, and complete non-K\"{a}hler metric with zero holomorphic sectional curvature and nonvanishing curvature tensor.\\

\end{abstract}

\section{Introduction}
\let\thefootnote\relax\footnotetext{Supported in part by NSFC (Grant No. 11801516) and ZJNSF (Grant No. LY19A010017)}
This note mainly concerns Hermitian manifolds with constant or pointwise constant holomorphic sectional curvature. For a general Hermitian manifold $(M, \om)$, the (Chern) holomorphic sectional curvature $H$ is defined by $$H(X)=R(X,\bar{X},X,\bar{X})/|X|^4,$$ where $R$ is the curvature tensor of the Chern connection and $X\in T_p^{1,0}(M)$(\cite{KN}\cite{Z}). The holomorphic sectional curvature plays a fundamental role in complex geometry. Complete K\"{a}hler manifolds with constant holomorphic sectional curvature are called complex space forms\cite{Z}. They are natural analogue of complete Riemannian manifolds with constant sectional curvature. It is known (\cite{H}, \cite{I}) that a simply connected complex space form is holomorphically isometric to the complex projective space $\mathbb{CP}^n$, the complex hyperbolic space $\mathbb{B}^n$ or $\mathbb{C}^n$.

 In \cite{BG}, Balas-Gauduchon prove that a compact Hermitian surface with constant nonpositive holomorphic sectional curvature must be K\"ahler. In higher dimension, it is known that there are examples of compact non-K\"{a}hler manifolds with $H=0$ (e.g. the Iwasawa manifold, see \cite{B}). A natural question is: if  $(M,\om)$ is an $n$-dimensional compact Hermitian manifold with constant (or pointwise constant) negative holomorphic sectional curvature and $n\geq 3$, is $(M,\om)$ still K\"{a}hler? 

The method of Balas-Gauduchon depends heavily on $n=2$. When $n\geq 3$, we restrict ourselves to locally conformal K\"{a}hler manifolds. There are rich examples of locally conformal K\"ahler manifolds, including the elliptic surfaces, diagonal Hopf manifolds and flat principal circle bundles over a compact Sasakian manifold \cite{DO}. We prove the following result.

\begin{thm} \label{thm1} Let $(M,\om)$ be a compact locally conformal K\"ahler manifold with constant nonpositive holomorphic sectional curvature. Then $(M, \om)$ is K\"ahler. In particular, the universal cover of $(M,\om)$ is the complex hyperbolic  space $\mathbb{B}^n$ or $\mathbb{C}^n$. \end{thm}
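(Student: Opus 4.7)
The strategy is to convert the constant holomorphic sectional curvature hypothesis into an integral identity whose only unknown is the Lee form $\theta$, and then use $c\leq 0$ to force $\theta\equiv 0$; once $(M,\om)$ is K\"{a}hler, Hawley-Igusa's classification yields the universal cover. Writing $d\om=\theta\wg\om$ with $\theta$ a closed real $1$-form, I would first pass to a pointwise identity by averaging $H(X)=R_{i\bj k\bl}X^i\bar X^j X^k\bar X^l/|X|^4$ over the unit sphere $\{|X|=1\}\subset T^{1,0}_pM\cong\mathbb C^n$. The standard evaluation
\[
\int_{S^{2n-1}}X^i\bar X^j X^k\bar X^l\,d\mu = \frac{1}{n(n+1)}\bigl(\delta^{ij}\delta^{kl}+\delta^{il}\delta^{kj}\bigr)
\]
converts $H\equiv c$ into the pointwise equality
\[
s+\hat s = n(n+1)\,c,
\]
where $s=g^{i\bj}g^{k\bl}R_{i\bj k\bl}$ is the Chern scalar curvature and $\hat s=g^{i\bl}g^{k\bj}R_{i\bj k\bl}$ the altered Chern scalar.

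Next I would feed in the LCK structure. Locally $\om=e^{f}\om_K$ for a K\"{a}hler metric $\om_K$ with $\theta=df$, so the first Chern-Ricci form transforms as $\rho(\om)=\rho(\om_K)-n\sdd f$; since $d\theta=0$, the correction $\sdd f$ extends to a globally defined exact $(1,1)$-form on $M$ (up to sign it is $\tfrac{1}{2}d(J\theta)$). Pairing $\rho(\om)$ with $\om^{n-1}$, using $d\om^{n-1}=(n-1)\theta\wg\om^{n-1}$ together with Stokes' theorem, converts $\int_M s\,\om^n$ into a topological Chern-Weil term (proportional to $c_1(M)\cup[\om^{n-1}]$) plus a quadratic expression in $\theta$. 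The parallel computation for the second Chern-Ricci form, combined with the Hermitian identities relating $s$ and $\hat s$ on an LCK manifold (the kind of ``formulae for scalar curvatures of special Hermitian metrics'' advertised in the abstract), should yield an analogous identity for $\int_M\hat s\,\om^n$ with the same topological contribution.

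Adding the two expressions and substituting $s+\hat s=n(n+1)c$ then causes the Chern-Weil contributions to cancel, leaving an identity of the schematic shape
\[
K\int_M|\theta|^2\,\om^n = n(n+1)\,c\cdot\mathrm{vol}(M,\om),
\]
with $K>0$ a universal constant depending only on $n$. Since $c\leq 0$ the right-hand side is nonpositive while the left is nonnegative, forcing $\theta\equiv 0$; then $d\om=0$, so $\om$ is K\"{a}hler. The K\"{a}hler metric now has constant nonpositive HSC, and Hawley-Igusa's classification identifies the simply-connected universal cover with $\mathbb B^n$ (if $c<0$) or $\mathbb C^n$ (if $c=0$).

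The main technical obstacle is establishing the two integral formulas for $\int_M s\,\om^n$ and $\int_M\hat s\,\om^n$ and verifying that the $|\theta|^2$ coefficients combine to a \emph{strictly} positive $K$; this requires careful bookkeeping of the Chern torsion, which on an LCK manifold is entirely determined by $\theta$ via $T^i_{jk}=\delta^i_k\theta_j-\delta^i_j\theta_k$ (up to conventions). A secondary delicate point is the case $c=0$, where the integral identity forces only $K\int_M|\theta|^2\om^n=0$; strict positivity of $K$ is then essential, as any cancellation would require an auxiliary Bochner argument on the Lee vector field to conclude $\theta\equiv 0$.
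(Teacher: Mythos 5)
Your reduction of $H\equiv c$ to the scalar identity $s+\hat s=n(n+1)c$ (by averaging over the unit sphere) is correct and is exactly the paper's equation obtained by double contraction of the symmetrized curvature identity. The fatal problem is the central integral identity $K\int_M|\theta|^2\,\om^n=n(n+1)c\cdot\mathrm{vol}(M,\om)$: it is false, and the claimed cancellation of the Chern--Weil contributions cannot happen. Test it on any compact quotient of $\mathbb{B}^n$ with the Bergman metric: this is K\"ahler, hence LCK with $\theta\equiv 0$, and has constant $c<0$, so your identity would read $0=n(n+1)c\cdot\mathrm{vol}<0$. What the integral bookkeeping actually yields is Gauduchon's formula $\int_M(s-\hat s)\,dv=\|\tau\|^2=(n-1)^2\|\theta^{1,0}\|^2$, which combined with $s+\hat s=n(n+1)c$ gives only $2\int_M s\,dv=n(n+1)c\cdot\mathrm{vol}+(n-1)^2\|\theta^{1,0}\|^2$; since $\int_M s\,dv$ is \emph{not} a topological quantity for a non-K\"ahler Hermitian metric (it depends on the metric through the Gauduchon degree), no sign conclusion follows. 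The difference $\int(s-\hat s)$ is where the topological terms cancel, not the sum, and the difference carries no information about $c$.

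The ingredient you are missing is the \emph{once-contracted} (not twice-contracted) consequence of pointwise constant $H$, namely $\rho^{(1)}+\rho^{(2)}+2\mathrm{Re}\,\rho^{(3)}=2(n+1)c\,\om$. The paper combines this with a specifically LCK identity $\rho^{(1)}-\rho^{(2)}=\frac{1}{n-1}\bigl((\hat s-s)\om+n\pa\pa^*\om\bigr)$ and with the fact that $\rho^{(3)}=\rho^{(1)}-\pa\pa^*\om$ is a closed real $(1,1)$-form (closedness of $\pa\pa^*\om$ uses $\pa\pa^*\om=\bap\bap^*\om$, valid for LCK). Taking $d$ of the resulting expression for $\rho^{(3)}$ gives the key pointwise equation $d\bigl(((n+1)(n-2)c+2\hat s)\,\om\bigr)=0$, which pins down $\hat s$: either the factor is nowhere zero, making $\om$ globally conformally K\"ahler, or it vanishes identically, in which case $\hat s\geq 0\geq s$ pointwise with strict inequality somewhere unless $c\equiv 0$, contradicting $\int_M(s-\hat s)\,dv\geq 0$. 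Note also that even this only proves $\om$ is \emph{globally conformally} K\"ahler; a second, separate argument (a Cauchy--Schwarz estimate $\int_M e^f\,dv\cdot\int_M e^{-f}\,dv\geq\mathrm{vol}^2$ applied to the conformal factor, using $c\leq 0$ again) is needed to force the conformal factor to be constant and conclude that $\om$ itself is K\"ahler. Your one-step attempt to prove $\theta\equiv 0$ directly skips this stage as well.
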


The proof of Theorem \ref{thm1} is based on a relation between the first and second Chern-Ricci curvatures for locally conformal K\"{a}hler metrics. In \cite{LY2}, Liu-Yang systematically study a variety of Ricci curvatures on a Hermitian manifold. Among other results, they derive explicit relations between all kinds of Ricci curvatures on general Hermitian manifolds. In the locally conformal K\"{a}hler case, we can get a simpler formula (see Proposition \ref{l3}). Then we are able to reduce the theorem to the conformally K\"{a}hler case. Actually we prove the following result under the more general pointwise constant condition.
\begin{thm} \label{lck0} Let $(M,\om)$ be a compact locally conformal K\"ahler manifold with pointwise nonpositive constant holomorphic sectional curvature. Then $(M, \om)$ is globally conformal K\"ahler. 
\end{thm}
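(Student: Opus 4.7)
The plan is to derive, from Proposition~\ref{l3} together with the pointwise constant holomorphic sectional curvature hypothesis, a second-order differential identity satisfied by the Lee form $\theta$ of the locally conformal K\"ahler metric, and then integrate over the compact manifold $M$ and exploit $c\le 0$ to force the de Rham class $[\theta]\in H^{1}(M,\mathbb R)$ to vanish; since $\theta$ is already closed, this means $\theta=df$ for a global function $f$, and consequently $e^{-f}\omega$ is a K\"ahler metric conformal to $\omega$.

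Concretely, I would start by polarizing $R(X,\bar X,X,\bar X)=c\,|X|^{4}$ in the direction $X\mapsto X+tY$. The coefficient of $|t|^{2}$ yields the symmetrized identity
\[
R_{i\bar j k\bar l}+R_{k\bar j i\bar l}+R_{i\bar l k\bar j}+R_{k\bar l i\bar j}=2c\bigl(g_{i\bar j}g_{k\bar l}+g_{k\bar j}g_{i\bar l}\bigr),
\]
and contracting with $g^{k\bar l}$ produces a linear relation among the four Chern--Ricci contractions whose sum equals $2(n+1)c\,g_{i\bar j}$. Proposition~\ref{l3} is precisely what controls the discrepancy between these Chern--Ricci contractions in the LCK setting, in terms of covariant derivatives of $\theta$ and tensor products of $\theta$ with itself and with $g$. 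Substituting it reduces the displayed identity to a second-order differential equation purely in $\theta$, $\nabla\theta$ and $|\theta|^{2}$, with source term $c\,g$.

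I would then trace the resulting equation, integrate over the compact $M$, and let the total divergence terms drop out. The sign $c\le 0$ should combine with nonnegative quadratic quantities in $\theta$ and $\nabla\theta$ to yield a coercive integrated inequality, forcing the harmonic part of $\theta$ (with respect to the metric, or to the Gauduchon representative of its conformal class) to vanish. Once $\theta=df$, the globally defined metric $e^{-f}\omega$ is K\"ahler, which is the conclusion. The main obstacle I anticipate is algebraic bookkeeping: on a non-K\"ahler Hermitian manifold there are four distinct Chern--Ricci contractions and two Chern scalar curvatures, and the polarization of pointwise constant $H$ couples them nontrivially, so isolating the specific combination in which Proposition~\ref{l3} and the curvature hypothesis cooperate to give a \emph{signed} integrable estimate---rather than an indefinite one---is the crux. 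A secondary subtlety is that the conclusion only requires the harmonic component of $\theta$ to die, not $\theta$ itself, so one must be careful to extract a statement at the level of cohomology classes rather than pointwise vanishing.
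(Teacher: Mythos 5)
Your opening moves (polarizing $H=c$ to get the symmetrized identity of Proposition~\ref{bal}, contracting with $\om$ to obtain $\rho^{(1)}+\rho^{(2)}+2\re\rho^{(3)}=2(n+1)c\,\om$ and $s+\hat{s}=n(n+1)c$, and feeding in Proposition~\ref{l3}) coincide with the paper's. But the endgame --- ``trace the resulting equation, integrate, and let $c\le 0$ force the harmonic part of $\theta$ to vanish'' --- has a genuine gap, and I do not see how to close it along the lines you describe. Tracing the substituted identity returns exactly $s+\hat{s}=n(n+1)c$ again (the trace of $\rho^{(3)}$ is $\hat{s}$ and the trace of $\pa\pa^*\om$ is $s-\hat{s}$, so everything cancels back to the known scalar relation), so integration yields no information about the class $[\theta]\in H^1(M,\mathbb{R})$. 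Moreover, a coercive estimate forcing nonnegative quadratic quantities in $\theta$, $\nabla\theta$ to vanish would prove $\theta=0$, i.e.\ that $\om$ itself is K\"ahler --- strictly more than the theorem asserts --- and in the generic case of the theorem one expects $\tau\neq 0$ and all those quantities strictly positive, so no such signed inequality is available.

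The missing idea is to use \emph{closedness}, not integration. In the LCK case $\pa\pa^*\om=\bap\bap^*\om$ is closed and $\rho^{(3)}=\rho^{(1)}-\pa\pa^*\om$ is therefore a closed real $(1,1)$-form; solving the system for $\rho^{(3)}$ gives
\begin{align*}
\rho^{(3)}=\frac{1}{4(n-1)}\Bigl[\bigl((n+1)(n-2)c+2\hat{s}\bigr)\om-(n-2)\pa\pa^*\om\Bigr],
\end{align*}
and applying $d$ yields $d(\vp\,\om)=0$ with $\vp=(n+1)(n-2)c+2\hat{s}$. This is the whole point: if $\vp$ is nowhere zero, $\om$ is conformal to the closed form $\pm\vp\,\om$, hence globally conformal K\"ahler --- no vanishing of $\theta$'s harmonic part is proved or needed beyond this. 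The LCK structure then gives $d\vp+\vp\theta=0$, so the zero set of $\vp$ is open and closed, leaving only the alternative $\vp\equiv 0$; that degenerate case is where the sign $c\le 0$ and an integral identity finally enter, via $\hat{s}=-\tfrac12(n+1)(n-2)c\ge 0\ge s$ contradicting $\int_M(s-\hat{s})\,dv=\|\tau\|^2\ge 0$ from Proposition~\ref{scal2} unless $c\equiv 0$ (in which case $\tau=0$ and $\om$ is K\"ahler). Your proposal has the right ingredients assembled but lacks both the observation that $\rho^{(3)}$ is closed and the open--closed dichotomy for $\vp$, which together carry the proof.
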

We remark that Vaisman \cite{Vai} has proved that a locally conformal K\"{a}hler metric with pointwise constant (Chern) sectional curvature is either globally conformal K\"ahler or has vanishing first Chern class. The constancy of sectional curvature is of course stronger than the constancy of holomorphic sectional curvature. For example, the sectional curvatures of $\mathbb{CP}^n$ and $\mathbb{B}^n$ $(n\geq 2$) are not pointwise constant.

An important class of locally conformal K\"ahler manifolds is called Vaisman manifolds, whose Lee form is parallel with respect to the Levi-Civita connection. It is shown\cite{PPS} that a Vaisman metric on a compact manifold must be Gauduchon. Then we obtain
\begin{cor} \label{cor1.3}
A compact Vaisman manifold with pointwise nonpositive constant holomorphic sectional curvature is K\"ahler.
\end{cor}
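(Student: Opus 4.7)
The plan is to combine Theorem \ref{lck0} with the cited fact \cite{PPS} that a compact Vaisman metric is Gauduchon. A Vaisman manifold is by definition locally conformal K\"ahler with parallel Lee form $\theta$ (with respect to the Levi-Civita connection); in particular $\nabla\theta=0$ implies $d^{*}\theta=0$, i.e.\ $\omega$ is Gauduchon. Since the hypotheses of Theorem \ref{lck0} are satisfied, we may upgrade ``locally'' to ``globally'' conformally K\"ahler: there exists a smooth function $f$ on $M$ and a K\"ahler form $\omega_{K}$ with $\omega = e^{f}\omega_{K}$.

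A direct computation gives $d\omega = df\wedge\omega$, so the Lee form of $\omega$ is the \emph{exact} $1$-form $\theta = df$. On the other hand, as noted above, the Vaisman hypothesis forces $\omega$ to be Gauduchon, hence $d^{*}\theta = 0$. Combining these,
\begin{equation*}
\Delta f \;=\; d^{*}df \;=\; d^{*}\theta \;=\; 0,
\end{equation*}
so $f$ is a harmonic function on the compact manifold $M$, and therefore constant. Consequently $\omega$ is a constant rescaling of $\omega_{K}$ and is itself K\"ahler, which proves the corollary.

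The only potentially subtle point is bookkeeping of conventions (signs and factors relating $\theta$ to $df$, and the equivalence ``Gauduchon $\iff$ $d^{*}\theta=0$'' which requires $n\ge 2$); for $n=1$ the statement is vacuous since every Hermitian metric on a Riemann surface is K\"ahler. Apart from these checks, no new analytic or curvature input is needed beyond Theorem \ref{lck0} and the Vaisman-implies-Gauduchon fact from \cite{PPS}, so the corollary is essentially a one-line consequence on the level of ideas.
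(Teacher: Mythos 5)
Your proof is correct and follows the paper's route: Vaisman $\Rightarrow$ Gauduchon (via \cite{PPS}) combined with Theorem \ref{lck0}, which is exactly how the paper deduces the corollary (through its Corollary \ref{lcg}). The only cosmetic difference is that where the paper invokes the uniqueness of the Gauduchon metric in a conformal class to conclude that the conformal factor is constant, you re-derive that fact directly in this special case ($\theta=df$ exact and coclosed, hence $f$ harmonic on a compact manifold and therefore constant); this is the same underlying computation.
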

Considering the constancy of holomorphic sectional curvature, a natural question is: does the pointwise constancy of $H$ imply the global constancy? 

When $\om$ is K\"ahler and $n\geq 2$, it is always true by the Schur's Lemma, as $\om$ is K\"{a}hler-Einstein and $ H$ is constant multiple of the scalar curvature (see Proposition \ref{bal}). If $\om$ is non-K\"{a}hler, we construct counterexamples showing that the Schur type result does not hold in general  (example \ref{ex1}). 
\begin{prop} \label{thm3} There exist non-K\"{a}hler, conformally flat metrics on $\mathbb{C}^n (n\geq 2)$ with pointwise negative constant (or pointwise positive constant) but not globally constant holomorphic sectional curvature. In the negative case, the metric is complete. 
\end{prop}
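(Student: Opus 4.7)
The plan is to look within the conformally flat family $\om=e^{2f}\om_0$ on $\mathbb{C}^n$, where $\om_0=\so\sum_k dz^k\wg d\bar z^k$ is the standard K\"ahler form and $f\in C^\infty(\mathbb{C}^n,\mathbb{R})$. Any such $\om$ with nonconstant $f$ is already non-K\"ahler: $d\om=2e^{2f}df\wg\om_0$, and for $n\geq 2$ the map $df\mapsto df\wg\om_0$ is injective on $1$-forms. The first step is to compute the Chern curvature of $\om$. Substituting $g_{i\bj}=e^{2f}\delta_{ij}$ into the standard formula
\begin{equation*}
R_{i\bj k\bar l}=-\pa_k\pa_{\bar l}g_{i\bj}+g^{p\bar q}\pa_k g_{i\bar q}\pa_{\bar l}g_{p\bj},
\end{equation*}
the gradient-squared contributions cancel and one obtains
\begin{equation*}
R_{i\bj k\bar l}=-2e^{2f}f_{k\bar l}\delta_{ij},
\end{equation*}
so the Chern holomorphic sectional curvature along $X=(X^1,\ldots,X^n)\in T^{1,0}\mathbb{C}^n$ reduces to
\begin{equation*}
H(X)=-\frac{2\,f_{k\bar l}X^k\overline{X^l}}{e^{2f}\sum_i|X^i|^2}.
\end{equation*}

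The key observation is that $H(X)$ is independent of $X$ at each point precisely when the complex Hessian of $f$ is a pointwise scalar multiple of the identity, i.e.\ $f_{k\bar l}=\mu(z)\delta_{kl}$, in which case $H=-2\mu\,e^{-2f}$. I would then take the simplest nonconstant solution, the quadratic $f(z)=\al|z|^2$ with $\al\in\mathbb{R}\setminus\{0\}$, for which $\mu\equiv\al$ and
\begin{equation*}
H\equiv -2\al\,e^{-2\al|z|^2}.
\end{equation*}
This is independent of $X$ but varies with $|z|$, giving pointwise constant yet not globally constant holomorphic sectional curvature. Choosing $\al>0$ produces the pointwise negative constant case, and $\al<0$ the positive case.

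It remains to verify completeness when $\al>0$. The $\om$-length of a radial segment from $0$ to Euclidean distance $r$ is $\int_0^r e^{\al t^2}dt$, which diverges as $r\to\infty$; hence the $\om$-distance from the origin to the sphere $\{|z|=r\}$ tends to infinity, every $\om$-bounded set is Euclidean-bounded with compact closure, and Hopf-Rinow yields metric completeness. The only mildly subtle step in the whole argument is translating pointwise constancy of $H$ into the scalar-Hessian condition on $f$; once the quadratic ansatz is chosen, the curvature formula is mechanical and the completeness estimate is elementary, so no serious obstacle is anticipated.
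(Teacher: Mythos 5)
Your proposal is correct and is essentially the paper's own construction (Example \ref{ex1}): a conformal factor $e^{c|z|^2}$ on the flat metric on $\mathbb{C}^n$, with pointwise constancy of $H$ read off from the fact that the complex Hessian of $c|z|^2$ is a multiple of the identity, yielding $H=-c\,e^{-c|z|^2}$. The only cosmetic differences are that you compute the Chern curvature directly and use polarization of Hermitian forms where the paper invokes its Proposition \ref{bal} via the conformal-change formula (\ref{313}); also, for completeness the clean justification is that $e^{2\alpha|z|^2}\ge 1$ gives $\om\ge\om_0$ (the length of a radial segment only bounds the distance from above, so by itself it does not show divergence of the distance function).
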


\begin{remk} If the holomorphic sectional curvature is defined using the Levi-Civita connection,  it is Gray-Vanheche \cite{GV} who first discovered that the Schur type result does not hold in non-K\"{a}hler setting.  Since the Levi-Civita connection coincides with the Chern connection if and only if the metric is K\"{a}hler, our result are obviously different from theirs. We refer to \cite{GV}\cite{SS}\cite{SK} and the references therein for more results and development in that direction.  Also see \cite{LU} for some recent results on almost K\"{a}hler 4-manifolds with constant nonnegative (Chern) holomorphic sectional curvature.
\end{remk}

Using the conformal change technique, we also show the following (see example \ref{ex2}).
 \begin{prop} \label{prop1.5}
There exist complete non-K\"{a}hler, conformal K\"{a}hler metric on $\mathbb{C}^n (n\geq 2)$ with zero holomorphic sectional curvature but nonvanishing curvature tensor.
\end{prop}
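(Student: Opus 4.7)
The plan is to give an explicit example by performing a conformal change of the Fubini--Study form. Let $\omega_0 = \sdd \log(1+|z|^2)$, the restriction of the Fubini--Study metric of $\mathbb{CP}^n$ to the affine chart $\mathbb{C}^n$, and set $f = \log(1+|z|^2)$, so that
\[
\omega := e^{2f}\omega_0 = (1+|z|^2)^2\,\omega_0.
\]
A direct expansion gives the clean closed form $g_{\omega, i\bj} = (1+|z|^2)\delta_{ij} - \bz_i z_j$, which is positive definite on all of $\mathbb{C}^n$ by the Cauchy--Schwarz inequality. By construction $\omega$ is (globally) conformal K\"ahler.

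The decisive point is that $f$ is itself a K\"ahler potential for $\omega_0$, so $\pa_i \pa_{\bj} f = g_{0, i\bj}$. Plugging this and the complex-space-form identity
\[
R^{\omega_0}_{i\bj k\bar{l}} = g_{0, i\bj}\,g_{0, k\bar{l}} + g_{0, i\bar{l}}\,g_{0, k\bj}
\]
(valid since $\omega_0$ has constant holomorphic sectional curvature $2$) into the Chern conformal change formula
\[
R^\omega_{i\bj k\bar{l}} = e^{2f}\bigl[R^{\omega_0}_{i\bj k\bar{l}} - 2\,\pa_i \pa_{\bj} f \cdot g_{0, k\bar{l}}\bigr],
\]
the two ``diagonal'' contributions cancel, leaving
\[
R^\omega_{i\bj k\bar{l}} = e^{2f}\bigl[g_{0, i\bar{l}}\,g_{0, k\bj} - g_{0, i\bj}\,g_{0, k\bar{l}}\bigr].
\]
Contracting with $X^i \ov{X}^j X^k \ov{X}^l$ gives zero by inspection, so $H^\omega \equiv 0$. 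On the other hand, the Chern tensor itself is nonzero: for example $R^\omega_{1\bar{1}2\bar{2}}(0) = -1$ whenever $n \geq 2$.

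The remaining properties follow quickly. For non-K\"ahlerness, $d\omega = 2\,df \wg \omega$, and since $df \not\equiv 0$ together with the injectivity of the Lefschetz operator $\La^1 \to \La^3$ (wedging with $\omega$) for $n \geq 2$ forces $d\omega \not\equiv 0$. For completeness, Cauchy--Schwarz gives $g_\omega(v, \ov{v}) = (1+|z|^2)|v|_{\text{euc}}^2 - |\langle v, z\rangle|^2 \geq |v|_{\text{euc}}^2$, so the $\omega$-distance dominates the Euclidean distance, and completeness of $\omega$ is inherited from that of the Euclidean metric.

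The main obstacle is locating the correct data $(\omega_0, f)$: one needs a K\"ahler $\omega_0$ on $\mathbb{C}^n$ and a conformal factor $e^{2f}$ whose complex Hessian exactly cancels $R^{\omega_0}$ on the holomorphic sectional curvature polynomial (without killing the Chern tensor), while simultaneously repairing the incompleteness of $\omega_0$. Once one observes that $H^\omega \equiv 0$ in the constant-curvature case forces $\sdd f = \tfrac{1}{2}H^{\omega_0}\,\omega_0$, the choice of $\omega_0$ as a complex space form and $f$ as its K\"ahler potential becomes the natural ansatz, and the affine-chart Fubini--Study form yields the clean explicit metric above from which all claims follow by direct computation.
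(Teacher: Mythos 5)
Your construction is exactly the paper's own proof (Example \ref{ex2}): the conformal change $\tilde\om=(1+|z|^2)^2\om_{FS}$ of the affine-chart Fubini--Study metric, using that the K\"ahler potential's complex Hessian cancels the constant-curvature term in the Chern conformal change formula, yielding $\wtl R_{i\bj k\bl}=e^f(h_{i\bl}h_{k\bj}-h_{i\bj}h_{k\bl})$ and hence $H\equiv 0$ with nonvanishing curvature. Your added verifications of non-K\"ahlerness and completeness (via $d\om=2df\wg\om$ and the Cauchy--Schwarz lower bound against the Euclidean metric) are correct and in fact slightly more detailed than the paper's.
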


To the authors' knowledge, all the previously known examples of non-K\"{a}hler manifolds with $H=0$ have vanishing curvature (the quotient of complex Lie groups, see \cite{Bo}). Our example implies that the holomorphic sectional curvature does not necessarily determine the curvature tensor of a Hermitian metric. Also, it  shows that the compactness condition in Theorem \ref{thm1} can not be replaced by completeness in the $H=0$ case. It would be an interesting question to study whether there exist similar examples as in Propostion \ref{thm3} and Proposition \ref{prop1.5} on compact manifolds.

We also discuss another notion of special Hermitian metrics, which is called the $k$-Gauduchon metric. It is introduced by Fu-Wang-Wu in \cite{FW} as a generalization of the Gauduchon metric. A $k$-Gauduchon metric is a Hermitian metric satisfying
$$\so \pa\bap\om^k\wg w^{n-k-1}=0.$$ In particular, a pluriclosed metric (i.e. $\pa\bap\om=0$) is 1-Gauduchon, while an Astheno-K\"{a}hler metric (i.e. $\pa\bap\om^{n-2}=0$) is $(n-2)$-Gauduchon. 
We have the following characterization.
\begin{prop}
Let $(M,\om)$ be an $n$-dimensional Hermitian manifold $(n\geq 3)$ and $k$ be an integer such that $1\leq k\leq n-1$. Then the following are equivalent:
\begin{itemize}
\label{kg01}
\item[(1)] $\om$ is $k$-Gauduchon;\\
\item[(2)]$s-\hat{s}=\dfrac{k-1}{n-2}|\pa^*\om|^2+\dfrac{n-1-k}{n-2}|\pa\om|^2$.
\end{itemize}
\end{prop}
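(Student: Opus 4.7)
The plan is to translate the differential-form condition defining a $k$-Gauduchon metric into a pointwise scalar equality, and then to match coefficients against the right-hand side.

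First, I would expand $\pa\bap\om^k$ via the Leibniz rule. Writing $\bap\om^k = k\,\bap\om\wedge\om^{k-1}$ and differentiating once more, and using that $\bap\om\wedge\pa\om = -\pa\om\wedge\bap\om$ (both being $3$-forms), one finds
$$\pa\bap\om^k = k\,\pa\bap\om\wedge\om^{k-1} + k(k-1)\,\pa\om\wedge\bap\om\wedge\om^{k-2}.$$
Wedging with $\om^{n-k-1}$ absorbs the remaining $\om$-powers into fixed exponents:
$$\so\,\pa\bap\om^k\wedge\om^{n-k-1} = k\,\so\,\pa\bap\om\wedge\om^{n-2} + k(k-1)\,\so\,\pa\om\wedge\bap\om\wedge\om^{n-3}.$$
Thus the $k$-Gauduchon condition is the vanishing of a specific linear combination (whose coefficients are polynomials in $k$) of the two top forms $\so\,\pa\bap\om\wedge\om^{n-2}$ and $\so\,\pa\om\wedge\bap\om\wedge\om^{n-3}$.

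Next, each of these two top forms is a smooth function times the volume form $\om^n$. I would compute these scalar coefficients in a unitary frame at a point, obtaining identities of the shape
\begin{align*}
n(n-1)\,\so\,\pa\bap\om\wedge\om^{n-2} &= \bigl((\hat s-s) + A\,|\pa^*\om|^2 + B\,|\pa\om|^2\bigr)\,\om^n,\\
n(n-1)(n-2)\,\so\,\pa\om\wedge\bap\om\wedge\om^{n-3} &= \bigl(C\,|\pa^*\om|^2 + D\,|\pa\om|^2\bigr)\,\om^n,
\end{align*}
for explicit rational constants $A,B,C,D$ independent of $k$. Identities of this kind appear in the systematic treatment of Hermitian scalar curvatures by Gauduchon and by Liu--Yang \cite{LY2}; their derivation is a direct, if tedious, computation using the definitions $s = g^{i\bj}\Ric^{(1)}_{i\bj}$ and $\hat s = g^{i\bj}\Ric^{(2)}_{i\bj}$ and the index-symmetries of the Chern curvature.

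Substituting these two pointwise identities into the expression from the first step and setting the resulting coefficient of $\om^n$ equal to zero produces a linear relation among $s-\hat s$, $|\pa^*\om|^2$, and $|\pa\om|^2$ whose weights depend affinely on $k$. The final step is purely algebraic: collecting terms must yield exactly the coefficients $\frac{k-1}{n-2}$ and $\frac{n-1-k}{n-2}$ in front of $|\pa^*\om|^2$ and $|\pa\om|^2$, respectively. The main obstacle is Step 2: one has to be careful to identify which contractions of the Chern curvature produce the two scalar curvatures $s$ and $\hat s$, and to isolate the trace part of $\pa\om$ that contributes $|\pa^*\om|^2$ from the traceless part that contributes $|\pa\om|^2$. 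Once the constants $A,B,C,D$ are pinned down, the $k$-dependence matches the claimed formula by inspection, and the equivalence in the proposition follows since all the manipulations go through pointwise in both directions.
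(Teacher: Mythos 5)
Your overall architecture coincides with the paper's: expand $\so\,\pa\bap\om^k\wg\om^{n-k-1}$ into a $k$-dependent linear combination of the two top forms $\so\,\pa\bap\om\wg\om^{n-2}$ and $\so\,\pa\om\wg\bap\om\wg\om^{n-3}$ (your Leibniz expansion is correct and is equivalent to the paper's identity expressing $\pa\bap\om^k\wg\om^{n-1-k}$ through $\pa\bap\om^{n-1}$ and $\pa\om\wg\bap\om\wg\om^{n-3}$), and then identify each top form pointwise with a combination of $s-\hat{s}$, $|\pa^*\om|^2$ and $|\pa\om|^2$. The gap is that your Step 2 --- which you yourself call ``the main obstacle'' --- is where the entire content of the proposition lives, and you do not carry it out: the constants $A,B,C,D$ are left undetermined, so the concluding claim that the $k$-dependence ``matches by inspection'' cannot be verified and is effectively circular. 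The two identities you need are
\begin{align*}
*\bigl(\so\,\pa\bap\om^{n-1}\bigr) &= (n-1)!\,\bigl(\hat{s}-s+|\pa^*\om|^2\bigr),\\
*\Bigl(\so\,\pa\om\wg\bap\om\wg\tfrac{\om^{n-3}}{(n-3)!}\Bigr) &= |\pa^*\om|^2-|\pa\om|^2,
\end{align*}
which amount to $A=0$, $B=1$ in your normalization of $\so\,\pa\bap\om\wg\om^{n-2}$. Neither is a computation one can wave through. The first rests on $s-\hat{s}=\ag\pa\pa^*\om,\om\rg=d^*\tau+|\tau|^2$, which uses the torsion commutation relation $R_{i\bj k\bl}-R_{i\bl k\bj}=-\nabla_i T_{\bj\bl k}$ together with $\tau=\La\pa\om=-\so\,\bap^*\om$. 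The second is proved in the paper by splitting $\pa\om$ into its primitive part plus $\tfrac{\so}{n-1}L\bap^*\om$ and applying the formula $*\al=(-1)^p(\so)^{k^2}L^{n-k}\al/(n-k)!$ for primitive $\al$; a brute-force unitary-frame evaluation of $\pa\om\wg\bap\om\wg\om^{n-3}$ would also work, but the separation into a trace contribution $|\pa^*\om|^2$ and a contribution $-|\pa\om|^2$, with exactly those signs and coefficients, is precisely what has to be exhibited.

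A smaller but substantive slip: the trace of the second Chern--Ricci form $\rho^{(2)}_{i\bj}=h^{k\bl}R_{k\bl i\bj}$ is again $s$, not $\hat{s}$; the Riemannian-type scalar curvature $\hat{s}=h^{i\bl}h^{k\bj}R_{i\bj k\bl}$ arises as the trace of $\rho^{(3)}$ (or $\rho^{(4)}$). As written, your definition $\hat{s}=g^{i\bj}\Ric^{(2)}_{i\bj}$ would force $s-\hat{s}\equiv 0$, so the index bookkeeping you defer to Step 2 needs to be corrected before the constants can be pinned down.
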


\begin{remk} Another characterization in terms of scalar curvatures of the Bismut connection (also called the Strominger connection \cite{ZZ}) is obtained in \cite{FZ}. A direct corollary is that  if $(M,\om)$ is $k$-Gauduchon, then $s\geq \hat{s}$. 
\end{remk}
 We mention that recently there have been breakthroughs on negative or positive holomorphic sectional curvatures on K\"{a}hler manifolds. A conjecture of Yau on negative holomorphic sectional curvature and ampleness of the canonical bundle has been confirmed \cite{TY} \cite{WY1}(see also \cite{DT}, \cite{HLW1}, \cite{HLW2}, \cite{HLWZ}, \cite{WWY}, \cite{WY2}, \cite {YZ19} e.t.c. for some earlier work and further generalizations).  Also, another conjecture of Yau which says  a compact K\"{a}hler manifold with positive holomorphic sectional curvature must be projective and rationally connected is proved by  Yang \cite{Y18} (see \cite{NZ} for related work). A compact Hermitian manifold with negative holomorphic sectional curvature is Kobayashi hyperbolic \cite{GR}. Its canonical bundle is also conjectured to be ample (see \cite{L}\cite{YZ19} for some recent progress). \\

The structure of the paper is as follows. In section 2, we give some background of Hermitian geometry.  In section 3, we prove Theorem \ref{thm1}, Theorem \ref{lck0} and give the examples mentioned above (see examples \ref{ex1}, \ref{ex2}, \ref{ex3}).  Finally, we study some properties of the $k$-Gauduchon metric and prove Proposition \ref{kg01} (see Proposition \ref{kg1}).\\

\noindent \textbf{Acknowlegements}. The first-named author and the third-named author would like to thank Professors Jixiang Fu and Xiaokui Yang for helpful discussions. They are also grateful to Professors Jiaping Wang and Fangyang Zheng for their encouragement and helpful suggestions. 

\section{Torsion one form and Ricci curvatures}

In this section, we give some background materials in Hermitian geometry. We will present some formulae related to the Chern connection.  The readers are referred to \cite{D},  \cite{LY1}, \cite{LY2}, \cite{V}, \cite{YZ18} and \cite{Z}  for more details. We remark that we do not make use of any good coordinates in our discussion. We will use Einstein summation notation throughout the paper.

\subsection{Operators on Hermitian manifolds}\hfill\\

Let $(M,g)$ be a $2n$-dimensional Riemannian manifold. Write $g=g_{ij}dx^i dx^j$, where $(x^1,x^2, ..., x^{2n})$ is a local coordinate on $M$. Denote $(g^{ij})$ the inverse matrix  of $(g_{ij}),1\leq i,j\leq 2n$. Then $g$ induces an inner product $\langle, \rangle$ on the cotangent bundle $T^*M$ by $\langle dx^i, dx^j \rangle=g^{ij}$. Let $\Lambda^kT^*M, \  1\leq k\leq 2n$ be the bundle of real $k$-forms. The inner product induced by $g$ on $\Lambda^kT^*M$ is:
 \begin{align}
 \label{in}\langle\alpha_1\wedge...\wedge \alpha_k, \beta_1\wedge...\wedge\beta_k\rangle=\det (\langle\alpha_i, \beta_j\rangle),\ \ \ \alpha_i, \beta_j\in T^*M.
 \end{align}
Equivalently,
 \begin{align} \langle \varphi , \psi \rangle=\dfrac{1}{k!}g^{i_1j_1}...g^{i_kj_k}\varphi_{i_1...i_k}\psi_{j_1...j_k},\label{expan}\end{align}
   for $$\varphi=\dfrac{1}{k!}\varphi_{i_1...i_k}dx^{i_1}\wedge...\wedge dx^{i_k},\ \ \ \ \
\psi=\dfrac{1}{k!}\psi_{j_1...j_k}dx^{j_1}\wedge...\wedge dx^{j_k},$$ where $\varphi_{i_1...i_k}$ is
 skew symmetric in $i_1,...,i_k$ and $\psi_{j_1...j_k}$ is skew symmetric in $j_1,...,j_k$. For $X\in TM$ and $\varphi \in \Lambda^kT^*M$, define the contraction (or interior product) $\iota_X \varphi \in \Lambda^{k-1}T^*M$ by
 $$\iota_X\vp \lt(X_1,..., X_{k-1}\right):= \vp(X, X_1, ...,X_{k-1}).$$ We have
 $$ \iota_X(\alpha_1\wedge...\wedge \alpha_k)=\sum_{i=1}^k(-1)^{i-1}\alpha_i(X)\alpha_1\wedge...\wedge \alpha_{i-1}\wedge\alpha_{i+1}\wedge...\wedge\alpha_k.$$
Denote $\widetilde{X}=g(X,\cdot)\in T^*M$ the metric dual of $X$, then we have
 \begin{align}
 \label{ip}\langle \iota_X\varphi , \psi\rangle =\langle \varphi, \widetilde{X}\wedge \psi\rangle
 \end{align}
 for $\varphi\in\wedge^{k+1}T^*M$ and $ \psi\in\wedge^{k}T^*M $.
 Indeed, if $\vp=\al_1\wedge...\wedge \alpha_{k+1},\ \psi=\beta_1\wedge...\wedge\beta_{k}, \al_i,\beta_j\in T^*M$,
 from (\ref{in}) we get
 \begin{align*}
 \ag\vp,\widetilde{X}\wg\psi\rg&=\sum_{i=1}^{k+1}(-1)^{i-1}\langle\al_i, \widetilde{X}\rangle\langle\al_1\wedge...\widehat{\al_i}\wedge...\wedge \alpha_{k+1},\beta_1\wedge...\wedge\beta_k\rangle\notag\\
&=\langle\sum_{i=1}^{k+1}(-1)^{i-1}\al_i(X)\al_1\wedge...\widehat{\al_i}\wedge...\al_{k+1}, \beta_1\wedge...\beta_k\rangle\notag \\
 &= \langle \iota_X\varphi , \psi\rangle.
 \end{align*}
 The general case follows by linear expansion.\\

Now assume that $(M,J)$ is a complex manifold. If the Riemannian metric $g$ satisfies $g(X,Y)=g(JX, JY)$ for all $X, Y \in TM$, then $g$ is called a Hermitian metric. Let $TM^{\mathbb{C}}=TM\otimes_{\mathbb{R}}{\mathbb{C}}$ be the complexified tangent bundle. Denote $h$ the $\mathbb{C}$-linear extension of $g$ to $TM^{\mathbb{C}}$. The fundamental $(1,1)$ form associated to $h$ is given by $\omega(X,Y)=g(JX, Y)$. Locally, $$\omega=\sqrt{-1}h_{i\bar{j}}dz^i\wedge d\bar{z}^j,$$where $(z^1, z^2, ..., z^n)$ is a local holomorphic coordinate and $h_{i\bar{j}}=h\left(\dfrac{\partial}{\partial z^i}, \dfrac{\partial}{\partial \bar{z}^j}\right)$. We also refer to $(M,\om)$ as a Hermitian manifold.  

Let ${\Omega}^{p,q}M$ be the space of $(p,q)$ form on $M$, $1\leq p,q\leq n.$
Extend the inner product $\langle \cdot,\cdot\rangle$ on $\La^kT^*M$  to $\Omega^{p,q}M$ in the following way:
$$\langle b\varphi_1+c\varphi_2 , \psi \rangle=b\langle \varphi_1 , \psi \rangle+c\langle \varphi_2 , \psi\rangle,$$
  $$\langle \varphi , b\psi_1+c\psi_2\rangle= \bar{b}\langle \varphi , \psi_1 \rangle+ \bar{c}\langle \varphi , \psi_2 \rangle,
  $$  for $b, c\in \mathbb{C}$ and  $\varphi_i, \psi_i\in \Lambda^kT^*M$. By this extension, for example,
  \begin{align*}
  \langle dz^i, dz^j\rangle=h^{i\bar{j}},
  \end{align*}
  where $(h^{i\bar{j}})$ is the (transposed) inverse matrix of $(h_{i\bar{j}})$(see \cite{LY2} for more details about the relation between $g_{i\bj}$ and $h_{i\bj})$).
   Also $\overline{\langle \varphi, \psi\rangle}=\langle \psi, \varphi\rangle$.
Write $(p,q)$ forms $\varphi$ and $\psi$ in local coordinates:
 $$ \varphi=\dfrac{1}{p!q!}\varphi_{i_1...i_p\overline{l_1}...\overline{l_q}}dz^{i_1}\wedge...\wedge dz^{i_p}\wedge d\bar{z}^{l_1}\wedge...\wedge d\bar{z}^{l_q}$$
  $$\psi=\dfrac{1}{p!q!}\psi_{j_1...j_p\overline{k_1}...\overline{k_q}}dz^{j_1}\wedge...\wedge dz^{j_p}\wedge d\bar{z}^{k_1}\wedge...\wedge d\bar{z}^{k_q},$$
 where $\varphi_{i_1...i_p\overline{l_1}...\overline{l_q}}$ is skew symmetric in $i_1, ..., i_p$ and skew symmetric in $l_1,..., l_q$ (similarly for $\psi_{j_1...j_p\overline{k_1}...\overline{k_q}})$. Then

\begin{align}
\langle \varphi, \psi\rangle=\dfrac{1}{p!q!}h^{i_1\bar{j_1}}...h^{i_p\overline{j_p}}h^{k_1\overline{l_1}}...h^{k_q\overline{l_q}}\varphi_{i_1...i_p\overline{l_1}...\overline{l_q}}\overline{\psi_{j_1...j_p\overline{k_1}..\overline{k_q}}
 }.\label{complex} \end{align}
This extends the formula (\ref{expan}). \\
Let $dv$ be the volume form of $g$, the total inner product is defined to be \begin{align*} (\varphi, \psi)=\int_M \langle \varphi, \psi\rangle dv.\end{align*}
Denote $|\vp|^2=\langle \vp, \varphi\rangle$.  The total norm is $$||\vp||^2=\int_M |\varphi|^2 dv.$$

The Hodge $*$ operator is the unique operator determined by $g$ satisfying the following:
  $$*:\Lambda^kT^*M\rightarrow \Lambda^{2n-k}T^*M, \ \ \ \ \varphi\wedge *{\psi}=\langle\varphi,\psi\rangle dv,$$ where $\varphi, \psi\in \Lambda^kT^*M$. It is extended $\mathbb{C}$-linearly to complex forms and satisfies:
 $$*:\Omega^{p,q}M\rightarrow\ \Omega^{n-q, n-p}M,\ \ \ \ \varphi\wedge *\overline{\psi}=\langle\varphi,\psi\rangle dv$$
  for $\varphi, \psi\in \Omega^{p,q}M$ and $dv=\dfrac{\omega^n}{n!}$.
Also, we have $$\overline{*\varphi}=*\overline{\varphi},\ \ \ \ **\varphi=(-1)^{p+q}\varphi,\ \ \ \  \langle *\varphi, *\psi\rangle=\langle\varphi,\psi\rangle.$$
The formal adjoint operators $\pa^*, \bar{\pa}^*$ are  given by \begin{align*} (\pa \varphi, \psi)=(\vp, \pa^* \psi),  \hspace*{.8cm} (\bap \varphi, \psi)=(\vp, \bap^* \psi). \end{align*} It is well known that $$\pa^*=-*\bap*, \ \ \ \bar{\pa}^*=-*\pa*$$ on compact Hermtian manifolds.
Write $\iota_j\varphi=\iota_{\frac{\partial}{\partial z^j}}\varphi$ and $\iota_{\bar{j}}\varphi=\iota_{\frac{\partial}{\partial\bar{z}^j}}\varphi$
 for convenience. It follows from (\ref{ip}) that
\begin{align}
\label{cw}\langle\varphi,\  dz^i\wedge\psi\rangle=\langle h^{j\bar{i}}\iota_j\varphi,\psi\rangle,\\
\notag \langle\varphi,\  d\bar{z}^i\wedge\psi\rangle=\langle h^{i\bar{j}}\iota_{\bar{j}}\varphi, \ \psi\rangle.
\end{align}
The Lefschetz operator $L:\Omega^{p,q}M\rightarrow\ \Omega^{p+1, q+1}M$ and its adjoint $\Lambda:\Omega^{p+1,q+1}M\rightarrow\ \Omega^{p, q}M$ are defined by
$$L\varphi=\omega\wedge\varphi, \ \ \ \ \langle L\varphi, \psi\rangle=\langle \varphi, \Lambda \psi\rangle.$$
From (\ref{cw}), in local coordinates, we get 
\begin{align} \Lambda=\sqrt{-1}h^{i\bar{j}}\iota_i\iota_{\bar{j}}.\label{contr}\end{align}
For $\vp\in\Om^{p,q}M $ with $p+q=k$, direct computation gives (see also \cite{V})$$
 \Lambda(\om\wg \vp)=(n-k)\vp+\om\wg (\Lambda \vp),$$ that is,
 \begin{align}
 \label{cm}[L,\Lambda]\vp=(k-n)\vp.
 \end{align}
Applying this equality repeatedly, we have
\begin{align*}
[L^r, \Lambda]\vp=[L^{r-s}, \La]L^s\vp+s(k-n+s-1)L^{r-1}\vp.
\end{align*}
In particular, for $s=r$,
\begin{align}
\label{cmr} [L^r, \Lambda]\vp=r(k-n+r-1)L^{r-1}\vp.
\end{align}

 \begin{defn} A $(p,q)$ form $\varphi$ is called primitive if $\Lambda\vp=0$.\end{defn}

\noindent For a primitive $\varphi\in \Om^{p,q}M$ with $p+q=k$, we have
 \begin{align}
 \label{eta}
  \Lambda(\om\wg \vp)&=(n-k)\vp,\ \ \ \ \ \ \ \ |\vp\wg \om|^2=(n-k)|\vp|^2,\\[.4em]
\label{lr} &\La L^r\varphi=r(n-k-r+1)L^{r-1}\vp. \\ \notag
\end{align}

\subsection {Torsion 1-form} \hfill\\

Let $\nabla$ be the Chern connection of a Hermtian manifold $(M, J, h)$. It is the unique connection such that $\nabla J=0, \nabla h=0$ and the torsion tensor
\begin{align*}T(X,Y)=\nabla_XY-\nabla_YX-[X,Y], \ \ \ \ \ \ X, Y\in TM.
\end{align*}
has vanishing $(1,1)$ part. Using local coordinates, the Christoffel symbols $\Gamma_{ij}^k$ and the torsion $T$ of $\nabla$ are given by (see e.g. \cite{TW})
$$\Gamma_{ij}^k=h^{k\bar{l}}\partial_i h_{j\bar{l}}, \ \ \ \ T_{ij}^k=\Gamma_{ij}^k-\Gamma_{ji}^k=h^{k\bl}T_{ij\bl}.$$ As $\omega=\sqrt{-1}h_{i\bar{j}}dz^i\wedge d\bar{z}^j$, then 
\begin{align}
\label{paom}\pa{\omega}=\dfrac{\sqrt{-1}}{2}T_{ij\bk}dz^i\wedge dz^j\wedge d\bz^k.
\end{align}
The torsion 1-form of the Chern connection is a $(1,0)$-form defined by
$\tau=\tau_i dz^i$ with $\tau _i=T_{ik}^k=h^{k\bl}T_{ik\bl}$. Using (\ref{contr}) and (\ref{paom}), direct computation gives 
 \begin{align}\label{tau0}
\tau=\Lambda \pa\om=\sum_{k=1}^n T_{ik}^k dz^i.
\end{align}
The following result is well known on Hermitian manifolds. 
\begin{lem}
	Let $(M,\om)$ be an $n$-dimensioanl Hermitian manifold.  Then
\begin{align}
\pa\om^{n-1}=\tau\wedge\om^{n-1}.\label{lee}
\end{align}
\end{lem}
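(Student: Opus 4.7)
The plan is to apply the Lefschetz decomposition to $\partial\omega$ as a $(2,1)$-form and then use the commutator identities (2.9) and (2.11) between $L$ and $\Lambda$.

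To start, I would write $\partial\omega^{n-1} = (n-1)\,\partial\omega \wedge \omega^{n-2}$. Since $\partial\omega$ is a $(2,1)$-form, it admits a Lefschetz decomposition
$$\partial\omega = \alpha_0 + \omega \wedge \beta,$$
where $\beta$ is a $(1,0)$-form (automatically primitive, since $\Lambda$ lowers both bidegrees by one) and $\alpha_0$ is a primitive $(2,1)$-form. Applying $\Lambda$ to both sides and using the first identity in (2.8) for $\beta$ with $k=1$, one obtains $\tau = \Lambda\partial\omega = (n-1)\beta$, so $\beta = \tau/(n-1)$. Wedging the decomposition with $\omega^{n-2}$ then yields
$$\partial\omega^{n-1} = (n-1)\,\alpha_0 \wedge \omega^{n-2} + \tau \wedge \omega^{n-1},$$
reducing the lemma to the vanishing of $\alpha_0 \wedge \omega^{n-2}$.

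For this last vanishing, I would apply (2.11) to the primitive form $\alpha_0$ of degree $k=3$ with $r = n-2$; the factor $(n-k-r+1)=0$ forces $\Lambda(\alpha_0 \wedge \omega^{n-2}) = 0$. Setting $\eta := \alpha_0 \wedge \omega^{n-2} \in \Omega^{n,n-1}M$, one also has $L\eta = 0$ by bidegree (it would live in $\Omega^{n+1,n}M$), so the commutator identity (2.9) applied to $\eta$ of total degree $2n-1$ gives $(n-1)\eta = [L,\Lambda]\eta = 0$. Hence $\eta=0$ for $n \geq 2$ (the case $n=1$ being trivial), and the lemma follows.

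The only nontrivial step is the vanishing of the primitive piece $\alpha_0 \wedge \omega^{n-2}$; this is an instance of the $sl_2$-structure on Hermitian forms encoded in (2.9) and (2.11), and once those identities are available, the proof amounts to choosing the correct indices.
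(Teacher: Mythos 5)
Your proof is correct and follows essentially the same route as the paper: both decompose $\partial\omega$ into its primitive part $\alpha_0=\partial\omega-\frac{1}{n-1}L\Lambda\partial\omega$ plus $\frac{1}{n-1}L\tau$ and reduce the lemma to the vanishing of $L^{n-2}\alpha_0$. The only difference is that the paper quotes Voisin for the implication $\Lambda\alpha_0=0\Rightarrow L^{n-2}\alpha_0=0$ for a $(2,1)$-form, whereas you derive exactly this instance from the $[L,\Lambda]$ commutation relations, which is a nice self-contained touch; just note that the identities you cite as (2.9) and (2.11) are the paper's (2.7) and (2.10).
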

\begin{proof}
	Define $\al_0=\pa\om-\frac{1}{n-1}L(\Lambda\pa\om)$. Then $\La\al_0=0$ by (\ref{eta}). This is equivalent to $L^{n-2}\al_0=0$ (see \cite{V}), i.e.
	\begin{align*}
(\pa\om-\frac{1}{n-1}L\Lambda\pa\om)\wedge\om^{n-2}=0.
	\end{align*}
	Then
	\begin{align*}
	\pa\om^{n-1}=&(n-1)\pa\om\wg\om^{n-2}\\
	=&L(\La\pa\om)\wg\om^{n-2}\\
	=&\tau\wg\om^{n-1},\end{align*} where the last equality follows from (\ref{tau0}).
	\end{proof}

The following lemma will be used frequently.  It may be obtained by Bochner formula on Hermitian manifolds (see e.g. \cite{G1}, \cite{LY2}). We provide a naive proof here.
\begin{lem}
Let $(M,\om)$ be a compact Hermitian manifold and $\tau=T_{ik}^k dz^i$ be the torsion 1-form of the Chern connection. Then
 \begin{align}
\label{tau1}
\tau=\Lambda \pa\om=-\sqrt{-1}  \bar{\pa}^*\om
\end{align}
\end{lem}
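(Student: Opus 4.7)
The first equality $\tau=\Lambda\pa\om$ is already recorded in (\ref{tau0}), so my task reduces to proving the second identity $\bap^*\om=\so\,\tau$. The plan is to verify it by pairing both sides against an arbitrary test $(1,0)$-form and invoking Stokes' theorem together with the structural identity (\ref{lee}).

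Concretely, for any $(1,0)$-form $\alpha$, the standard identity $*\om=\om^{n-1}/(n-1)!$ for the fundamental form combined with the definition of the adjoint gives
\[(\alpha,\bap^*\om)=(\bap\alpha,\om)=\frac{1}{(n-1)!}\int_M \bap\alpha\wg\om^{n-1}.\]
Bidegree considerations collapse the exterior derivative ($\pa\alpha\wg\om^{n-1}=0$ and $\alpha\wg\pa\om^{n-1}=0$ for type reasons), so Stokes' theorem applied to $d(\alpha\wg\om^{n-1})$, combined with the complex conjugate of (\ref{lee})---namely $\bap\om^{n-1}=\bar\tau\wg\om^{n-1}$---yields
\[\int_M\bap\alpha\wg\om^{n-1}=\int_M\alpha\wg\bar\tau\wg\om^{n-1}.\]
A direct local computation with the coefficient formula (\ref{complex}) shows the pointwise identity $\ag\alpha\wg\bar\tau,\om\rg=-\so\,\ag\alpha,\tau\rg$, so the right-hand side equals $-\so\,(n-1)!\,(\alpha,\tau)$. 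Combining these steps gives $(\alpha,\bap^*\om)=-\so\,(\alpha,\tau)=(\alpha,\so\,\tau)$, where the last equality uses the conjugate-linearity of the Hermitian pairing in its second slot. Since $\alpha$ is arbitrary, $\bap^*\om=\so\,\tau$, equivalently $\tau=-\so\,\bap^*\om$.

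There is no substantial obstacle here, as the argument merely combines (\ref{lee}) with elementary Hodge-theoretic manipulations. The step most prone to sign errors is the pointwise identification $\ag\alpha\wg\bar\tau,\om\rg=-\so\,\ag\alpha,\tau\rg$: the factor $\so$ coming from $\om=\so\,h_{i\bj}dz^i\wg d\bz^j$ and its complex conjugate $\overline{h_{i\bj}}=h_{j\bi}$ must be tracked carefully together with the convention that the $L^2$ pairing is conjugate-linear in its second argument.
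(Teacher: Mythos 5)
Your argument is correct and is essentially the paper's own proof: both reduce the claim to pairing against a test $(1,0)$-form, invoke $\pa\om^{n-1}=\tau\wg\om^{n-1}$ from (\ref{lee}), and finish with the algebraic identity $\ag\vp\wg\ov{\tau},\om\rg=\ag\vp,\so\tau\rg$. The only cosmetic difference is that the paper works pointwise via $\bap^*=-*\pa*$ and $*\om=\om^{n-1}/(n-1)!$, whereas you run the integrated $L^2$ pairing through Stokes' theorem and then appeal to nondegeneracy; since $\bap^*=-*\pa*$ is itself established by the same integration by parts, the two routes coincide in substance.
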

\begin{proof}
Let $\vp$ be any $(1,0)$ form. As $*\om=\dfrac{\om^{n-1}}{(n-1)!}$, by (\ref{lee}),
\begin{align*}
\ag\vp, \bap^*\om\rg\dfrac{\om^n}{n!}=& \vp\wg\ov{\tau}\wg\frac{\om^{n-1}}{(n-1)!}\\=&\ag\vp\wg \ov{\tau},\om\rg\dfrac{\om^n}{n!}\\
=&\ag\vp,\so \tau\rg\dfrac{\om^n}{n!}
\end{align*}
Then we see $\tau=-\so\bap^*\om$. The first equality in (\ref{tau1}) follows from (\ref{tau0}).
\end{proof}\noindent Recall that a Hermitian metric is called balanced if $d\om^{n-1}=0$, namely, $\tau=0$. We can easily get the following fact (also see \cite{B}).
\begin{cor} Let $(M,\om)$ be a compact Hermitian manifold. If the torsion 1-form $\tau$ is holomorphic, then $(M,\om)$ is balanced.
\end{cor}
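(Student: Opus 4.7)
The plan is to bootstrap off the preceding lemma, which gives the identity $\tau=-\sqrt{-1}\,\bap^*\om$ on any compact Hermitian manifold. Once that is in hand, the corollary is essentially an $L^2$ pairing argument: we want to show that the hypothesis $\bap\tau=0$ forces $\tau=0$, because balanced is equivalent to $\tau=0$ by (\ref{lee}).

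Concretely, I would compute the total norm $\|\tau\|^2=(\tau,\tau)$ by substituting $\tau=-\sqrt{-1}\,\bap^*\om$ into the first slot. Using that $(\cdot,\cdot)$ is linear in the first argument, this gives
\begin{equation*}
\|\tau\|^2=-\sqrt{-1}\,(\bap^*\om,\tau).
\end{equation*}
Now $\om$ is a $(1,1)$-form and $\tau$ is a $(1,0)$-form, so $\bap\tau$ is a $(1,1)$-form of the same bidegree as $\om$, and the adjoint relation for $(\bap,\bap^*)$ on the compact manifold $M$ yields $(\bap^*\om,\tau)=(\om,\bap\tau)$. (The Hermitian symmetry $\overline{\ag\vp,\psi\rg}=\ag\psi,\vp\rg$ noted in Section~2.1 is what justifies moving the adjoint across the second slot.) Holomorphicity of $\tau$ (i.e. $\bap\tau=0$) then makes the right-hand side vanish, so $\|\tau\|^2=0$ and hence $\tau\equiv 0$. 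Together with (\ref{lee}), this gives $d\om^{n-1}=\bap\om^{n-1}+\pa\om^{n-1}=\bap\om^{n-1}+\tau\wg\om^{n-1}=\bap\om^{n-1}$, and taking the conjugate of $\pa\om^{n-1}=0$ also shows $\bap\om^{n-1}=0$, so $d\om^{n-1}=0$ and $(M,\om)$ is balanced.

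There is no serious obstacle here; the one small point to be careful about is the sesquilinearity convention of $(\cdot,\cdot)$ so that the scalar $-\sqrt{-1}$ comes out of the first slot without being conjugated, and the resulting pairing $(\om,\bap\tau)$ is indeed the one that vanishes under the hypothesis. All of the real work has already been done in the preceding lemma identifying $\tau$ with the $\bap$-codifferential of $\om$ up to a constant.
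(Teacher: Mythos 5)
Your proposal is correct and is essentially the paper's own argument: the paper computes $\|\bap^*\om\|^2=(\bap\bap^*\om,\om)=0$ from $\bap\tau=-\so\bap\bap^*\om=0$, which is the same integration-by-parts as your $\|\tau\|^2=-\so(\om,\bap\tau)=0$ since $\tau=-\so\bap^*\om$. No substantive difference.
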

\begin{proof}
By (\ref{tau1}),  $\bap{\tau}=-\sqrt{-1}\bar{\pa}\bar{\pa}^*\om=0$, then we have $||\bar{\pa}^*\om||^2=(\ov{\pa}\bar{\pa}^*\om,\om)=0$. So $\tau=-\sqrt{-1}\bar{\pa}^*\om=0$ and $\om$ is balanced.

\end{proof}

\subsection{Curvatures on Hermitian manifolds}\hfill\\

Let  $R_{i\bar{j}k\bar{l}}$ be the components of the curvature tensor of the Chern connection $\nabla$, then $$R_{i\bar{j}k\bar{l}}=-h_{p\bl}\pa_{\bj}\Gamma_{ik}^p.$$ Also, the following commutative relations hold (see e.g. \cite{N}, \cite{SW}):.
\begin{align}
\label{c1} R_{i\bar{j}k\bar{l}}-R_{k\bar{j}i\bar{l}}&=-\nabla_{\bar{j}}T_{ik\bar{l}}\\
	\label{c2}R_{i\bar{j}k\bar{l}}-R_{i\bar{l}k\bar{j}}&=-\nabla_iT_{\bar{j}\bar{l}k}\\
 	R_{i\bar{j}k\bar{l}}-R_{k\bar{l}i\bar{j}}&=-\nabla_{\bar{j}}T_{ik\bar{l}}-\nabla_kT_{\bar{j}\bar{l}i}. \label{c3}
 \end{align}
The $i$-th Chern-Ricci form $\rho^{(i)}=\sqrt{-1}\rho^{(i)}_{i\bj}dz^i\wedge d\bar{z}^j,\  1\leq i\leq 4 $ are defined by
 \begin{align*}
 	\rho_{i\bj}^{(1)}&=h^{k\bl}R_{i\bj k\bl},\ \ \ \ \rho_{i\bj}^{(2)}=h^{k\bl}R_{k\bl i\bj},\\
 	\rho_{i\bj}^{(3)}&=h^{k\bl}R_{i\bl k\bj},\ \ \ \ \rho_{i\bj}^{(4)}=h^{k\bl}R_{k\bj i\bl}.
 \end{align*}
The two scalar curvatures are
 \begin{align*}
 	s=h^{i\bj}h^{k\bl}R_{i\bj k\bl},\ \ \ \ \hat{s}=h^{i\bl}h^{k\bj}R_{i\bj k\bl},
 \end{align*}
where $s$ is the usual Chern scalar curvature and $\hat{s}$ is a Riemannian type scalar curvature. Both $s$ and $\hat{s}$ are real. 
\begin{prop} [\cite{G1}]
Let $(M,\om)$ be a compact Hermitian manifold, then
\begin{align}\label{scal2}
s-\hat{s}=\langle \pa\pa^* \om, \om\rangle=d^*\tau+|\tau|^2.
\end{align}
\end{prop}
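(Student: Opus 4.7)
The plan is to prove both equalities by direct coordinate computation, with the Chern curvature identity (\ref{c2}) and the formula $\tau=-\so\,\bap^*\om$ from (\ref{tau1}) as the two main inputs. Expanding the scalars and relabeling $j\leftrightarrow l$ gives
\begin{align*}
s-\hat{s} &= h^{i\bj}h^{k\bl}R_{i\bj k\bl}-h^{i\bl}h^{k\bj}R_{i\bj k\bl} \\
&= h^{i\bj}h^{k\bl}(R_{i\bj k\bl}-R_{i\bl k\bj}),
\end{align*}
and applying (\ref{c2}) turns this into $-h^{i\bj}h^{k\bl}\na_i T_{\bj\bl k}$. A short calculation with $T_{\bj\bl k}=\pa_{\bj}h_{k\bl}-\pa_{\bl}h_{k\bj}$ identifies $h^{k\bl}T_{\bj\bl k}=\bar\tau_{\bj}$; since $\bar\tau$ is a $(0,1)$-tensor, $\na_i\bar\tau_{\bj}=\pa_i\bar\tau_{\bj}$, so $s-\hat{s}=-h^{i\bj}\pa_i\bar\tau_{\bj}$. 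The first equality now follows quickly: from (\ref{tau1}) and reality of $\om$, $\pa^*\om=\overline{\bap^*\om}=-\so\,\bar\tau$, hence $\pa\pa^*\om=-\so\,\pa\bar\tau$, and pairing against $\om$ by the $(1,1)$-form inner product (\ref{complex}) gives $\ag\pa\pa^*\om,\om\rg=-h^{i\bj}\pa_i\bar\tau_{\bj}$, matching $s-\hat{s}$.

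For the second equality, since $\tau\in\Om^{1,0}M$ we have $\bap^*\tau=0$, so $d^*\tau=\pa^*\tau$. I compute $\pa^*\tau$ directly from $(\pa^*\tau,f)=(\tau,\pa f)$ for $f\in C^{\infty}(M)$: expanding against the volume form $\om^n/n!$ and integrating by parts gives a coordinate expression for $\pa^*\tau$ involving $\pa_{\bj}h^{i\bj}$ and $\pa_{\bj}\log\det(h)$. The key algebraic identity is $\overline{\Gamma^j_{jn}}=\pa_{\bar n}\log\det(h)-\bar\tau_{\bar n}$, immediate from $\tau_n=\Gamma^k_{nk}-\Gamma^k_{kn}$; after its use the two $\log\det$ contributions cancel, leaving $\pa^*\tau=-h^{i\bj}\pa_{\bj}\tau_i-h^{i\bj}\tau_i\bar\tau_{\bj}=-h^{i\bj}\pa_{\bj}\tau_i-|\tau|^2$. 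Therefore $d^*\tau+|\tau|^2=-h^{i\bj}\pa_{\bj}\tau_i$, which is the complex conjugate of $-h^{i\bj}\pa_i\bar\tau_{\bj}=s-\hat{s}$. Since $s-\hat{s}$ is manifestly real, the two coincide.

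The main obstacle is the computation of $\pa^*\tau$: on a non-K\"{a}hler Hermitian manifold the naive K\"{a}hler formula $-h^{i\bj}\na_{\bj}\tau_i$ acquires a torsion correction, and one must track the cancellation of $\log\det(h)$ terms carefully to expose the clean $-|\tau|^2$. A secondary subtlety is that the coordinate expressions obtained for $\ag\pa\pa^*\om,\om\rg$ and for $d^*\tau+|\tau|^2$ come out as complex conjugates of one another rather than pointwise identical; their equality is recovered from the reality of $s-\hat{s}$, which follows from the Chern curvature symmetry $R_{i\bj k\bl}=\overline{R_{j\bi l\bk}}$.
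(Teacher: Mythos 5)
Your argument is correct, and for the first equality it follows the paper's route essentially verbatim: contract the commutation identity (\ref{c2}) with $h^{i\bj}h^{k\bl}$, identify $h^{k\bl}T_{\bj\bl k}$ with $\bar\tau_{\bj}$, and match the result against the coordinate expression of $\pa\pa^*\om$ obtained from $\tau=-\so\,\bap^*\om$. For the second equality the two proofs diverge in method though not in substance. The paper isolates a general pointwise identity for $(1,0)$-forms, $\pa^*\phi=\ag\so\bap\phi,\om\rg+\ag\phi,\so\bap^*\om\rg$, and proves it by Hodge-star manipulation, using $*\phi=-\so\,\phi\wg\om^{n-1}/(n-1)!$ together with $\pa\om^{n-1}=\tau\wg\om^{n-1}$; applying it to $\phi=\tau$ gives $d^*\tau=\ag\bap\bap^*\om,\om\rg-|\tau|^2$ in one line. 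Your integration by parts against test functions produces exactly the coordinate form of that same lemma, namely $\pa^*\phi=-h^{i\bj}\pa_{\bj}\phi_i-h^{i\bj}\phi_i\bar\tau_{\bj}$: the torsion correction you extract from the cancellation of the $\log\det(h)$ terms (via $\tfrac{1}{\det h}\pa_{\bj}(\det h\cdot h^{i\bj})=h^{i\bj}\bar\tau_{\bj}$) is precisely the paper's second summand $\ag\phi,\so\bap^*\om\rg$. The paper's derivation is coordinate-free and slightly slicker, and its lemma is stated for arbitrary $(1,0)$-forms; yours is more elementary and makes explicit how the naive K\"ahler adjoint formula acquires a torsion term. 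Your final step, passing from $-h^{i\bj}\pa_{\bj}\tau_i$ to $s-\hat{s}$ by conjugation and the reality of $s-\hat{s}$ (which follows from $R_{i\bj k\bl}=\overline{R_{j\bi l\bk}}$, as the paper notes), is sound; the paper handles the same conjugation issue by observing $\ag\bap\bap^*\om,\om\rg=\overline{\ag\pa\pa^*\om,\om\rg}=s-\hat{s}$.
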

\begin{proof}
By (\ref{tau1}),
\begin{align}
\label{pps}\pa\pa^*\om=-\so \pa_iT_{\bar{j}\bar{l}}^{\ \bl}dz^i\wg d\bz^j.
\end{align}
Then from (\ref{c2}) we have
 \begin{align}
\label{scal} s-\hat{s}=-h^{i\bj}\pa_iT_{\bar{j}\bar{l}}^{\ \bl}=\langle \pa\pa^* \om, \om\rangle.
 \end{align} As $\tau=-\so\bap^*\om$, the last equality of (\ref{scal2}) follows from the lemma below.
\end{proof}
\begin{lem}
Let $(M,\om)$ be a compact Hermitian manifold and $\phi$ be a (1,0) form on $M$. Then
 \begin{align}
\label{star1f}\pa^*\phi=\ag\so \bap\phi, \om\rg+\ag\phi,\so\bap^*\om\rg.
 \end{align}
\end{lem}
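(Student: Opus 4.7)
The target is a pointwise scalar identity, and the most efficient route is to verify its integrated form
\begin{align*}
(\pa f, \phi) = (f, \pa^*\phi)
\end{align*}
against an arbitrary smooth test function $f$ and then strip $f$ off. So my first step is to rewrite $(\pa f,\phi)$ purely in terms of $f$, $\bap\phi$, $\om$ and $\tau$.

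The engine is Stokes' theorem applied to the $(2n-1)$-form $f\bar\phi\wedge\om^{n-1}/(n-1)!$: its $d$-image integrates to zero on the closed manifold $M$. Writing $d=\pa+\bap$ and using Leibniz, only the top-bidegree $(n,n)$-parts contribute, which reduces the vanishing to
\begin{align*}
\int_M \pa(f\bar\phi)\wedge\frac{\om^{n-1}}{(n-1)!} = \int_M f\bar\phi\wedge\frac{\pa\om^{n-1}}{(n-1)!}.
\end{align*}
Substituting the previously established identity (\ref{lee}), $\pa\om^{n-1}=\tau\wedge\om^{n-1}$, injects the torsion $1$-form into the right-hand side, while expanding $\pa(f\bar\phi)=\pa f\wedge\bar\phi+f\pa\bar\phi$ breaks the left-hand side into two $(1,1)$-form wedges with $\om^{n-1}/(n-1)!$.

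I would then apply the elementary pointwise identity $\alpha\wedge\om^{n-1}/(n-1)!=(\Lambda\alpha)\,\om^n/n!$ for any $(1,1)$-form $\alpha$ to convert every wedge-product integrand into an inner-product integrand. After this translation, and using $\overline{\Lambda\bap\phi}=\Lambda\pa\bar\phi$ together with the observation that $\Lambda$ acts as $\langle\cdot,\om\rangle$ on $(1,1)$-forms, the integrated identity reads
\begin{align*}
(\pa f,\phi) = \bigl(f,\sqrt{-1}\Lambda\bap\phi\bigr) - \bigl(f,\langle\phi,\tau\rangle\bigr).
\end{align*}

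Since $f$ is arbitrary this strips to the pointwise equation $\pa^*\phi=\langle\sqrt{-1}\bap\phi,\om\rangle-\langle\phi,\tau\rangle$. Finally, invoking (\ref{tau1}) in the form $\sqrt{-1}\bap^*\om=-\tau$ rewrites $-\langle\phi,\tau\rangle$ as $\langle\phi,\sqrt{-1}\bap^*\om\rangle$, delivering the claim. The one real obstacle is careful bookkeeping of $\sqrt{-1}$-factors and complex conjugations when commuting scalars past the Hermitian inner product (for instance that $\langle\sqrt{-1}\bap\phi,\om\rangle=\sqrt{-1}\Lambda\bap\phi$ while $\langle\phi,\sqrt{-1}\bap^*\om\rangle=-\sqrt{-1}\langle\phi,\bap^*\om\rangle$); once that is organized, the proof is essentially immediate from Stokes' theorem combined with the immediately preceding Lemma.
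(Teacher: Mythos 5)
Your argument is correct, and it reaches the identity by a mildly different mechanism than the paper. The paper first determines the Hodge star of a $(1,0)$-form explicitly, $*\phi=-\so\,\phi\wg\frac{\om^{n-1}}{(n-1)!}$ (by testing against $(1,0)$-forms), and then computes $\pa^*\phi=-*\bap*\phi$ pointwise, expanding $\bap\bigl(\phi\wg\om^{n-1}\bigr)$ by Leibniz and substituting the conjugate of (\ref{lee}); you instead invoke the defining adjoint relation $(\pa f,\phi)=(f,\pa^*\phi)$ against an arbitrary test function, run Stokes' theorem on $f\bar\phi\wg\om^{n-1}/(n-1)!$, and strip $f$ off at the end. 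The computational core is the same in both cases --- the Leibniz expansion together with the two inputs (\ref{lee}) and (\ref{tau1}) --- so the proofs are essentially equivalent in content. The trade-off is that the paper's route yields the pointwise formula directly and records the reusable expression for $*$ on $(1,0)$-forms, whereas yours avoids any explicit Hodge-star computation at the cost of an extra duality step (concluding $u\equiv 0$ from $(f,u)=0$ for all smooth $f$) and a more essential reliance on compactness through Stokes. Your $\so$-bookkeeping is consistent with the paper's convention that the inner product is conjugate-linear in the second slot, and the translation $\alpha\wg\frac{\om^{n-1}}{(n-1)!}=(\La\alpha)\frac{\om^{n}}{n!}$ with $\La\alpha=\ag\alpha,\om\rg$ on $(1,1)$-forms is exactly the device the paper uses as well.
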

\begin{proof}
 For any $(1,0)$ form  $\vp$, we have
\begin{align*}
\vp\wg *\ov{\phi}&=\ag\so \vp\wg\ov{\phi}, \om\rg\frac{\om^n}{n!}=\so \vp\wg\ov{\phi}\wg\frac{\om^{n-1}}{(n-1)!}
\end{align*}
Then we see $*\phi=-\so \phi\wg\dfrac{\om^{n-1}}{(n-1)!}$. By  (\ref{lee}) and (\ref{tau1}), we have
\begin{align*}
\pa^*\phi=&*\bap(\so\phi\wg\dfrac{\om^{n-1}}{(n-1)!})\\
=&*(\so(\bap\phi-\phi\wg\ov{\tau})\wg\dfrac{\om^{n-1}}{(n-1)!})\\
=&\ag\so\bap\phi,\om\rg+\ag\phi, \so\bap^*\om\rg.
\end{align*}
\end{proof}
To compare the Ricci curvatures, we introduce some notion first.
\noindent For each $1\leq i\leq n$, define local $(2,0)$ forms $\xi^i=\frac{1}{2}T^i_{jk}dz^j\wedge dz^k$ as in \cite{YZ18} (see more details there). The column vector of the torsion 2-forms $(\xi^i)$ is denoted by $\xi$. Denote \begin{align*}
^t\xi\wg h\bar{\xi}&=\frac{1}{4} h_{i\bar{l}}T^i_{jk}T^{\bar{l}}_{\bar{r}\bar{s}}dz^j\wedge dz^k\wedge d\bar{z}^r\wedge d\bar{z}^s
\end{align*}
Then $^t\xi\wg h\bar{\xi}$ is independent of the local coordinates. It is a global nonnegative $(2,2)$ form defined on $M$ and vanishes if and only if $T=0$ (see \cite{YZ18}). By (\ref{contr}),  we have
\begin{align}
\Lambda(^t\xi\wg h\bar{\xi})=\so h^{i\bar{j}} h^{p\bar{q}}T_{ik\bar{q}}T_{\bar{j}\bar{l}p}dz^k\wedge d\bar{z}^l \label{contr3}
\end{align}

The following relation is established in Theorem 4.1 in \cite{LY2}. We give a direct proof here.
\begin{prop}[\cite{LY2}]
	Let $(M,h)$ be a compact n-dimensional Hermitian manifold and the $(2,2)$ form $^t\xi\wg h\bar{\xi}$ is defined as above. Then
 \begin{align}
 \label{ricci2} \rho^{(1)}-\rho^{(2)}=\La(\so\pa\bap\om)+\pa\pa^*\om+\bap\bap^*\om-\La(^t\xi\wg h\bar{\xi})
 \end{align}
\end{prop}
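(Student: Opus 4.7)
The plan is to verify the identity componentwise in local holomorphic coordinates, with the commutation relation (\ref{c3}) providing the conceptual skeleton. Tracing (\ref{c3}) against $h^{k\bl}$ immediately yields
$$\rho^{(1)}_{i\bj}-\rho^{(2)}_{i\bj}=-h^{k\bl}\na_{\bj}T_{ik\bl}-h^{k\bl}\na_k T_{\bj\bl i}.$$
The task then is to identify the right-hand side term by term with the components of $\La(\so\pa\bap\om)+\pa\pa^*\om+\bap\bap^*\om-\La({}^t\xi\wg h\bar{\xi})$.

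First I would handle the easier piece $-h^{k\bl}\na_{\bj}T_{ik\bl}$. Since $\na h=0$ and the mixed-type Chern Christoffels $\Gamma^{p}_{\bj i}$ vanish, one may pull $h^{k\bl}$ inside the covariant derivative to get $-\na_{\bj}\tau_i=-\pa_{\bj}\tau_i$. By (\ref{tau1}), $\bap^*\om=\so\tau$, so $\bap\bap^*\om=\so\bap\tau$, whose coefficient in the basis $\so dz^i\wg d\bz^j$ is exactly $-\pa_{\bj}\tau_i$, matching this term.

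Next I would attack $-h^{k\bl}\na_k T_{\bj\bl i}$, which is more delicate because $\na_k$ now acts on a lower holomorphic index. Expanding $\na_k T_{\bj\bl i}=\pa_k T_{\bj\bl i}-\Gamma^{p}_{ki}T_{\bj\bl p}$, the $\pa_k$-contribution should be matched against $\La(\so\pa\bap\om)$. I would compute the latter directly: from (\ref{paom}) and its conjugate one obtains a local expression for $\so\pa\bap\om$ in terms of $\pa_\bullet T_{\bullet\bullet\bullet}$, and applying $\La=\so h^{a\bb}\iota_a\iota_{\bb}$ via (\ref{contr}) gives a $(1,1)$-form whose components, after exploiting the $(2,0)$-antisymmetry of $T$, naturally split into a piece proportional to $h^{k\bl}\pa_k T_{\bj\bl i}$ (which cancels the $\pa_k$-piece above) and a piece proportional to $h^{k\bl}\pa_i T_{\bj\bl k}$. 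Using $h^{k\bl}T_{\bj\bl k}=\bar\tau_{\bj}$ and $\pa^*\om=-\so\bar\tau$ from (\ref{tau1}), the Leibniz rule turns this second piece into the components of $\pa\pa^*\om$, up to a residual quadratic torsion term generated by $\pa_i h^{k\bl}$.

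Finally I would collect all the leftover quadratic-in-$T$ pieces, which come from two sources: the Christoffel correction $\Gamma^{p}_{ki}T_{\bj\bl p}$ and the derivative $\pa_i h^{k\bl}$ produced in the previous step. Writing $\Gamma^{p}_{ki}=h^{p\bq}\pa_k h_{i\bq}$ and using $T_{ki\bq}=\pa_k h_{i\bq}-\pa_i h_{k\bq}$, these residual terms reassemble into $\so h^{k\bp}h^{q\bl}T_{ik\bl}T_{\bj\bp q}$, which by (\ref{contr3}) is precisely the $(i\bj)$-component of $\La({}^t\xi\wg h\bar{\xi})$, with the correct sign to give $-\La({}^t\xi\wg h\bar{\xi})$. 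The main obstacle is not conceptual but bookkeeping: tracking signs, the $1/2$'s from (\ref{paom}), and combinatorial factors from the antisymmetrizations inside $\La$, and then carefully separating a "covariant-derivative" part (producing $\pa\pa^*\om$ and $\bap\bap^*\om$) from a "torsion-bilinear" part (producing $\La({}^t\xi\wg h\bar{\xi})$) in the algebraic identity.
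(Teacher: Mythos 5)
Your proposal is correct and follows essentially the same route as the paper: trace the commutation identity (\ref{c3}) with $h^{k\bl}$, identify $-h^{k\bl}\na_{\bj}T_{ik\bl}$ with $\bap\bap^*\om$ via (\ref{tau1}), and expand $\La(\so\pa\bap\om)$ in coordinates so that converting partial to covariant derivatives produces $\pa\pa^*\om$ together with the quadratic torsion remainder $-\La({}^t\xi\wg h\bar{\xi})$. The paper merely packages the second step as a separate lemma and passes from $\pa$ to $\na$ directly rather than via the Leibniz rule on $h^{k\bl}$, which is the same bookkeeping.
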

\begin{proof}
 From (\ref{c3}), we have
 \begin{align}
 \label{riccif} \rho_{i\bj}^{(1)}-\rho_{i\bj}^{(2)}=h^{k\bl}(\nabla_kT_{\bl\bj i}-\nabla_{\bj}T_{ik\bl}).
 \end{align}
Recall that $\bap^*\om=\sum_{k=1}^n \sqrt{-1} T_{ik}^k dz^i$, so
 \begin{align}
 \label{dbar1} \bar{\pa}\bar{\pa}^*\om&=-\sqrt{-1}\sum_{k=1}^n \nabla_{\bj}T_{ik}^{\ k}dz^i\wg d\bz^j
 \end{align}
Then (\ref{ricci2}) follows from (\ref{riccif}) and the next lemma.
\end{proof}

\begin{lem}
	Let $(M,h)$ be a compact Hermitian manifold and $\om=\sqrt{-1} h_{i \bar j}dz^i\wg d{\bar z}^ j.$ Then
	\begin{align}
\so h^{i\bj}\nabla_{i}T_{\bj\bl k}dz^k\wg d\bz^l=
\Lambda(\so\pa\bar{\pa}\om)+\pa\pa^*\om-\La(^t\xi\wg h\bar{\xi})\label{L4}
	\end{align}
\end{lem}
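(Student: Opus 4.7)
The plan is to expand $\La(\so\pa\bap\om)$ directly in local coordinates and regroup the resulting sum into three pieces matching the three terms on the right-hand side. First, conjugating (\ref{paom}) gives $\bap\om = -\frac{\so}{2}T_{\bi\bj k}\,dz^k\wg d\bz^i\wg d\bz^j$ with $T_{\bi\bj k}:=\ov{T_{ij\bk}}$. Differentiating and using (\ref{contr}) for $\Lambda$, I expand the double contraction $\so h^{s\bar{t}}\iota_s\iota_{\bar{t}}$ on the resulting $(2,2)$-form. This produces four monomials of the form $(\text{scalar})\,dz^k\wg d\bz^j$, which by the antisymmetry $T_{\bi\bj k}=-T_{\bj\bi k}$ combine into
\begin{align*}
\La(\so\pa\bap\om) = \so h^{p\bi}\pa_p T_{\bi\bj k}\,dz^k\wg d\bz^j - \so h^{p\bi}\pa_k T_{\bi\bj p}\,dz^k\wg d\bz^j.
\end{align*}

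Next, I rewrite the partial derivatives as Chern-covariant derivatives. Because $\nabla h=0$ and the only nonvanishing Christoffel symbols of the Chern connection are $\Gamma^k_{ij}$ and their conjugates, $\nabla_p T_{\bi\bj k}=\pa_p T_{\bi\bj k}-\Gamma^m_{pk}T_{\bi\bj m}$ and similarly with $p,k$ exchanged. Conjugating the definition $\tau_i=h^{k\bl}T_{ik\bl}$ in (\ref{tau0}) yields $h^{p\bi}T_{\bi\bj p}=-\tau_{\bj}$, hence $h^{p\bi}\nabla_k T_{\bi\bj p}=-\nabla_k\tau_{\bj}=-\pa_k\tau_{\bj}$ (the last equality because $\Gamma^{\bar m}_{k\bj}=0$). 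Substituting into the two terms of the previous display and collecting gives
\begin{align*}
\La(\so\pa\bap\om) &= \so h^{p\bi}\nabla_p T_{\bi\bj k}\,dz^k\wg d\bz^j + \so\,\pa_k\tau_{\bj}\,dz^k\wg d\bz^j \\
&\quad + \so h^{p\bi}(\Gamma^m_{pk}-\Gamma^m_{kp})T_{\bi\bj m}\,dz^k\wg d\bz^j.
\end{align*}
The first summand is precisely the left-hand side of the lemma after the relabeling $(p,\bi,\bj)\mapsto(i,\bj,\bl)$, and by (\ref{pps}) the second summand equals $-\pa\pa^*\om$.

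It remains to identify the third summand with $\La(^t\xi\wg h\bar{\xi})$. Using $\Gamma^m_{pk}-\Gamma^m_{kp}=T^m_{pk}=h^{m\bar{r}}T_{pk\bar{r}}$, that summand equals $\so h^{p\bi}h^{m\bar{r}}T_{pk\bar{r}}T_{\bi\bj m}\,dz^k\wg d\bz^j$, which, after relabeling the four dummy indices, is exactly formula (\ref{contr3}) for $\La(^t\xi\wg h\bar{\xi})$. Rearranging the three pieces produces the claimed identity. The main obstacle is purely bookkeeping: keeping track of the antisymmetrizations in the contractions, the conjugation conventions that yield $h^{p\bi}T_{\bi\bj p}=-\tau_{\bj}$, and the precise dummy-index matching with (\ref{contr3}). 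Any slip in these conventions would either scramble the pairing of the four contraction terms into the stated two, or introduce spurious torsion-squared contributions that fail to cancel against $\La(^t\xi\wg h\bar{\xi})$.
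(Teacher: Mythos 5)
Your proposal is correct and follows essentially the same route as the paper: expand $\La(\so\pa\bap\om)$ in local coordinates via $\Lambda=\so h^{i\bj}\iota_i\iota_{\bj}$, convert the partial derivatives of the torsion to Chern-covariant derivatives, and identify the contracted term with $-\pa\pa^*\om$ via (\ref{pps}) and the Christoffel commutator term with $\La(^t\xi\wg h\bar{\xi})$ via (\ref{contr3}). The only cosmetic difference is that you contract to $\tau_{\bj}$ before differentiating, whereas the paper keeps $\nabla_kT_{\bl\bj i}$ and contracts at the end; the index bookkeeping and sign conventions in your argument all check out.
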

\begin{proof}
Direct computation gives that
\begin{align*}
\Lambda(\so\pa\bar{\pa}\om)&=\so h^{i\bar{j}}\iota_i\iota_{\bar{j}}(\frac{1}{4}(\pa_pT_{\bq\bl k}-\pa_kT_{\bq\bl p})dz^p\wg dz^k\wg d\bz^q\wg d\bz^l)\\
&=\so h^{i\bj}(\pa_iT_{\bj\bl k}-\pa_kT_{\bj\bl i})dz^k\wg d\bz^l\\
&=\so h^{i\bj}(\nabla_iT_{\bj \bl k}+\nabla_kT_{\bl\bj i}+h^{r\bar{s}}T_{ik\bar{s}}T_{\bj\bl r})dz^k\wg d\bz^l.
\end{align*}
Then (\ref{L4}) follows from (\ref{pps}) and (\ref{contr3}).
\end{proof}

\section{LCK metrics and holomorphic sectional curvature}
In this section, we will  discuss locally conformal K\"ahler (LCK for short)  manifolds and constant(or pointwise constant) holomorphic sectional curvature. Then we prove Theorem \ref{thm1} and give examples of non-K\"{a}hler manifolds with constant or pointwise constant holomorphic sectional curvature.  

Let $(M,\om)$ be an $n$-dimensional Hermitian manifold and $n\geq 2$.  A Hermitian metric $(M,\om)$ is called locally conformal K\"ahler if  $$d\om=\theta\wedge \om$$ and $\theta$ is closed. This is equivalent to that $\om$ is locally conformal to a K\"ahler metric. The real 1-form $\theta$ is called the Lee form of the Hermitian metric.  If $\om$ is locally conformal K\"ahler,  by (\ref{eta}) and (\ref{tau1}), we have
\begin{align}
\label{lck}
\pa\om=\dfrac{1}{n-1}\tau\wedge \om.
\end{align}
Note that (\ref{lck}) is also valid on any compact Hermitian surfaces.
 We have the following simple observation.
\begin{prop}
	Let $(M,\om)$ be a compact Hermitian surface. If the second Chern-Ricci curvature $\rho^{(2)}$ is closed, then $(M,\om)$ is a K\"ahler surface. In particular, if $\rho^{(1)}=\rho^{(2)}$, then $\om$ is K\"ahler.
\end{prop}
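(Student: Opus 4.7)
The plan is to show that the torsion 1-form $\tau$ vanishes. Since $\pa\om=\tau\wedge\om$ on a Hermitian surface by (\ref{lck}), this will force $d\om=0$, so $\om$ is K\"ahler. The ``in particular'' clause is trivial: $\rho^{(1)}$ is always $d$-closed, so $\rho^{(1)}=\rho^{(2)}$ implies $\rho^{(2)}$ is closed and reduces to the main statement.

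First I would specialize formula (\ref{ricci2}) to $n=2$. The surface identity $\pa\om=\tau\wedge\om$ yields the torsion formula $T_{ij\bar k}=\tau_i h_{j\bar k}-\tau_j h_{i\bar k}$; contracting indices gives $\Lambda(^{t}\xi\wedge h\bar\xi)=|\tau|^2\om$. Similarly, $\pa\bap\om=(\pa\bar\tau+\tau\wedge\bar\tau)\wedge\om$ yields $\Lambda(\so\pa\bap\om)=(F+|\tau|^2)\om$ with $F:=\Lambda(\so\pa\bar\tau)$. Combined with $\pa\pa^*\om=-\so\pa\bar\tau$ and $\bap\bap^*\om=\so\bap\tau$ from (\ref{tau1}), formula (\ref{ricci2}) collapses to
\[
\rho^{(1)}-\rho^{(2)} = F\om - \so\pa\bar\tau + \so\bap\tau,
\]
and reality of the left-hand side forces $F$ to be real.

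Since $\rho^{(1)}$ is always closed, $d\rho^{(2)}=0$ makes the right-hand side closed; extracting its $(2,1)$-part gives $\pa(F\om)=-\so\pa\bap\tau$. I would wedge this on the left with $\bar\tau$ and integrate over $M$. On the left, Stokes combined with the trace identity $\pa\bar\tau\wedge\om=-\so F\,dv$ yields $-\so\int_M F^2\,dv$. On the right, one integration by parts converts $-\so\int_M\bar\tau\wedge\pa\bap\tau$ into $-\so\int_M\pa\bar\tau\wedge\bap\tau$; the key input is the Stokes identity $\int_M d\tau\wedge d\bar\tau=0$, which on a surface (after discarding vanishing $(3,1)$ and $(1,3)$ pieces) reduces to $\int_M\pa\tau\wedge\bap\bar\tau + \int_M\pa\bar\tau\wedge\bap\tau = 0$. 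Since $\int_M\pa\tau\wedge\bap\bar\tau = \|\pa\tau\|^2$, the right side becomes $\so\|\pa\tau\|^2$. Equating,
\[
\int_M F^2\,dv + \|\pa\tau\|^2 = 0,
\]
so $F\equiv 0$ and $\pa\tau=0$.

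Finally, I would deduce $\tau=0$ from $F=0$ by computing $\int_M\tau\wedge\bar\tau\wedge\om$ two ways: direct expansion gives $-\so\int_M|\tau|^2\,dv$, while substituting $\bar\tau\wedge\om=\bap\om$ and integrating by parts produces $\int_M\bap\tau\wedge\om=\so\int_M F\,dv=0$. Hence $\int_M|\tau|^2\,dv=0$ and $\tau=0$, so $\om$ is K\"ahler. The main hurdle is the surface-specific collapse of (\ref{ricci2}) into the three-term expression above, which depends on the torsion identity $T_{ij\bar k}=\tau_i h_{j\bar k}-\tau_j h_{i\bar k}$ and the dimension-2 vanishing of $(3,1)$- and $(1,3)$-forms; once this is in hand the remaining steps are routine integrations by parts.
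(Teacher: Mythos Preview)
Your proof is correct, but it takes a substantially longer route than the paper's. The paper invokes Gauduchon's surface identity $\rho^{(1)}-\rho^{(2)}=\pa\pa^*\om-\pa^*\pa\om$ (Lemma~1.12 in \cite{G1}); once both sides are $d$-closed, applying $\pa$ gives $\pa\pa^*\pa\om=0$, whence $\|\pa^*\pa\om\|^2=(\pa\pa^*\pa\om,\pa\om)=0$ and then $\|\pa\om\|^2=(\pa^*\pa\om,\om)=0$---three lines total.

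You instead specialize the general formula (\ref{ricci2}) to $n=2$, obtaining $\rho^{(1)}-\rho^{(2)}=F\om-\so\pa\bar\tau+\so\bap\tau$ with $F=\Lambda(\so\pa\bar\tau)$ (which, incidentally, equals $\hat s-s$ by (\ref{scal2}), giving reality of $F$ for free). Your integration-by-parts argument is sound and actually extracts \emph{more} along the way: you get $F\equiv 0$ (i.e.\ $s=\hat s$ pointwise) and $\pa\tau=0$ as intermediate conclusions before reaching $\tau=0$. The cost is the extra machinery---computing $\Lambda(^t\xi\wedge h\bar\xi)$ and $\Lambda(\so\pa\bap\om)$ on a surface, and the two-stage Stokes manipulation. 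The paper's approach is shorter because it outsources the key identity to \cite{G1}; yours is more self-contained, using only what is developed in the present paper. Both are valid; if you want a streamlined write-up, the Gauduchon identity is the shortcut.
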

\begin{proof}
	By Lemme 1.12 in \cite{G1}, we have
	\begin{align*}
	\rho^{(1)}-\rho^{(2)}=\pa\pa^*\om-\pa^*\pa\om.
	\end{align*}
	As $\rho^{(1)}$ is closed, if in addition $\rho^{(2)}$ is closed, then 
	\begin{align*}
	{\|\pa^*\pa\om\|}^2=(\pa\pa^*\pa\om, \pa\om)=0.
	\end{align*}
	It follows that $(\pa^*\pa\om, \om)=||\pa\om||^2=0,$ i.e. $\om$ is K\"ahler.
\end{proof}

%When $n\geq 3$, the quotient of a complex nonabelian Lie group by a discrete subgroup are examples of compact non-K\"ahler Hermitian manifold with vanishing curvature (\cite{B}).

For LCK  manifolds or  surfaces, we have the following relations between $\rho^{(1)}$ and $\rho^{(2)}$. \begin{prop}
\label{l3} 
Let $(M,\om)$ be a compact locally conformal K\"ahler manifold. Then
\begin{align}
\rho^{(1)}-\rho^{(2)}=\dfrac{1}{n-1}((\hat{s}-s)\om+n\pa\pa^*\om).
\end{align}
If $(M, \om)$ is a compact Hermitian surface, then \begin{align}
\label{l4} \rho^{(1)}-\rho^{(2)}=(\hat{s}-s)\om+\pa\pa^*\om+\bap\bap^*\om.
\end{align}
\end{prop}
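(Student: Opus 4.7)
The plan is to start from the general identity (\ref{ricci2}),
$$\rho^{(1)}-\rho^{(2)}=\La(\so\pa\bap\om)+\pa\pa^*\om+\bap\bap^*\om-\La({}^t\xi\wg h\bar{\xi}),$$
and specialize the right-hand side using the relation $\pa\om=\tfrac{1}{n-1}\tau\wg\om$ from (\ref{lck}), which is available in both scenarios. Conjugating yields $\bap\om=\tfrac{1}{n-1}\bar{\tau}\wg\om$, and conjugating (\ref{tau1}) gives $\pa^*\om=-\so\bar{\tau}$, so that $\pa\pa^*\om=-\so\pa\bar{\tau}$.

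The main computation is to evaluate the first and last terms on the right of the general identity. Differentiating $\bap\om$ in $\pa$ and reusing (\ref{lck}) produces
$$\so\pa\bap\om=-\frac{1}{n-1}\pa\pa^*\om\wg\om+\frac{\so}{(n-1)^2}\tau\wg\bar{\tau}\wg\om.$$
Applying $\La$ via the commutator identity (\ref{cm}), $\La(\om\wg\varphi)=(n-k)\varphi+\om\wg\La\varphi$, on these $(1,1)$-factors, together with $\La(\pa\pa^*\om)=s-\hat{s}$ from (\ref{scal2}), pins down $\La(\so\pa\bap\om)$. In parallel, the LCK/surface torsion formula $T_{ij\bk}=\tfrac{1}{n-1}(\tau_i h_{j\bk}-\tau_j h_{i\bk})$---read off by matching (\ref{paom}) with (\ref{lck})---substituted into (\ref{contr3}) and contracted with $h^{i\bj}h^{p\bq}$ gives
$$\La({}^t\xi\wg h\bar{\xi})=\frac{|\tau|^2}{(n-1)^2}\om+\frac{\so(n-2)}{(n-1)^2}\tau\wg\bar{\tau}.$$
These $|\tau|^2\om$ and $\tau\wg\bar{\tau}$ contributions cancel exactly against the matching pieces in $\La(\so\pa\bap\om)$, leaving
$$\La(\so\pa\bap\om)-\La({}^t\xi\wg h\bar{\xi})=-\frac{n-2}{n-1}\pa\pa^*\om+\frac{\hat{s}-s}{n-1}\om.$$

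Feeding this back into (\ref{ricci2}) gives $\rho^{(1)}-\rho^{(2)}=\tfrac{1}{n-1}\pa\pa^*\om+\tfrac{\hat{s}-s}{n-1}\om+\bap\bap^*\om$. For a compact Hermitian surface ($n=2$) this is already the claimed (\ref{l4}), with no further input required. For an LCK manifold of any dimension, closedness of the Lee form $\theta=\tfrac{1}{n-1}(\tau+\bar{\tau})$ forces the $(1,1)$-component of $d\theta$ to vanish, i.e.\ $\bap\tau+\pa\bar{\tau}=0$; hence $\bap\bap^*\om=\so\bap\tau=-\so\pa\bar{\tau}=\pa\pa^*\om$, and substituting yields the stated $\tfrac{1}{n-1}((\hat{s}-s)\om+n\pa\pa^*\om)$. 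The main obstacle I anticipate is the index bookkeeping in $\La({}^t\xi\wg h\bar{\xi})$: the quadratic torsion expands into four terms of the form $\tau_?\bar{\tau}_? h_{??}h_{??}$, and the contractions $h^{i\bj}h_{p\bj}=\delta^i_p$ must be applied carefully so that three of these terms combine into the $(n-2)\tau\wg\bar{\tau}$ piece while the fourth produces the $|\tau|^2\om$ contribution, with the precise coefficients being exactly what make the cancellation with $\La(\so\pa\bap\om)$ work.
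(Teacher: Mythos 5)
Your argument is correct, and I have checked the two key computations: under $\pa\om=\tfrac{1}{n-1}\tau\wg\om$ one indeed gets $\so\pa\bap\om=-\tfrac{1}{n-1}\pa\pa^*\om\wg\om+\tfrac{\so}{(n-1)^2}\tau\wg\bar{\tau}\wg\om$ and, from $(n-1)T_{ij\bk}=\tau_ih_{j\bk}-\tau_jh_{i\bk}$, $\La({}^t\xi\wg h\bar{\xi})=\tfrac{1}{(n-1)^2}\bigl(|\tau|^2\om+(n-2)\so\tau\wg\bar{\tau}\bigr)$, so the quadratic torsion terms cancel and the formula follows. However, your route differs from the paper's. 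The paper does not pass through the general identity $\rho^{(1)}-\rho^{(2)}=\La(\so\pa\bap\om)+\pa\pa^*\om+\bap\bap^*\om-\La({}^t\xi\wg h\bar{\xi})$ at all; it works directly from the pointwise commutation formula $\rho^{(1)}_{i\bj}-\rho^{(2)}_{i\bj}=h^{k\bl}(\nabla_kT_{\bl\bj i}-\nabla_{\bj}T_{ik\bl})$, substitutes the LCK torsion identity into the single term $h^{k\bl}\nabla_kT_{\bl\bj i}$ to recognize $\tfrac{1}{n-1}(\pa\pa^*\om-(s-\hat{s})\om)$, and never has to touch the quadratic torsion term. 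The paper's computation is therefore shorter and avoids the delicate four-term contraction you flag as the main obstacle; your version is a legitimate alternative that buys something conceptual, namely it makes explicit why the nonnegative $(2,2)$ torsion contribution $\La({}^t\xi\wg h\bar{\xi})$ is exactly absorbed by the quadratic part of $\La(\so\pa\bap\om)$ in the locally conformal K\"ahler case. Both arguments rest on the same two structural inputs, the torsion identity $(n-1)T_{ij\bk}=\tau_ih_{j\bk}-\tau_jh_{i\bk}$ and the relation $\pa\pa^*\om=\bap\bap^*\om$ coming from closedness of the Lee form, and both specialize correctly to the surface case where the coefficient $\tfrac{n-2}{n-1}$ vanishes.
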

\begin{proof}
If $\om$ is locally conformal K\"{a}hler or $n=2$, then  $$\pa \om=\frac{1}{n-1}\tau\wg \om.$$ In local coordinates,
\begin{align*}
\pa\om=\frac{\so}{2}T_{ij\bk}dz^i\wg dz^j\wg d\bz ^k,\\
\tau\wg\om=\dfrac{\so}{2}(h_{j\bk}T_{il}^{\ l}-h_{i\bk}T_{jl}^{\ l})dz^i\wg dz^j\wg d\bz ^k.
\end{align*}
Then we have
\begin{align}
\label{lcf1}(n-1)T_{ij\bk}=h_{j\bk}T_{il}^{\ l}-h_{i\bk}T_{jl}^{\ l}.
\end{align}
By (\ref{riccif}),
 \begin{align} \label{riccif2}\rho_{i\bj}^{(1)}-\rho_{i\bj}^{(2)}=h^{k\bl}(\nabla_kT_{\bl\bj i}-\nabla_{\bj}T_{ik\bl}).
 \end{align}
By (\ref{lcf1}), we get
\begin{align}
&\notag\so h^{k\bl}\nabla_kT_{\bl\bj i}dz^i\wg d\bz^j\\
\notag=&\frac{\so}{n-1}h^{k\bl}\nabla_k(h_{i\bj}T_{\bl\bar{s}}^{\ \bar{s}}-h_{i\bl}T_{\bj\bar{s}}^{\ \bar{s}})dz^i\wg d\bz^j
\\
\label{lcf2}=&\frac{1}{n-1}(\pa\pa^*\om-(s-\hat{s})\om).
\end{align}
For a locally conformal K\"ahler metric, since $d(\tau+\bar{\tau})=0$, we have
\begin{align*}
\pa\pa^*\om-\bap\bap^*\om=0.
\end{align*}
Combining (\ref{riccif2}), (\ref{lcf2}) and  $\bap\bap^*\om=-\so\nabla_{\bj}T_{ik}^{\ k}dz^i\wg d\bz^j$,  we get (\ref{l3}) and (\ref{l4}).
\end{proof}
\begin{remk} The relations between all kinds of Ricci curvatures on general Hermitian manifolds are obtained in Liu-Yang\cite{LY2}. We get the simpler formula (\ref{l3}) due to the locally conformal K\"ahler condition in our case .\end{remk}

Now we consider Hermitian metrics with constant holomorphic sectional curvature. For a Hermitian manifold $(M,\om)$, the holomorphic sectional curvature is defined by $$H(X)=R(X,\bar{X},X,\bar{X})/|X|^4,$$ where $X\in T_p^{1,0}(M)$ and $R$ is the curvature tensor of the Chern connection. It is well known that when $\om$ is K\"ahler, $H$ dominates the whole curvature tensor $R$. In particular, $H=c$ if and only if \begin{align}\label{symm} R_{i\bar{j}k\bar{l}}=\frac{1}{2}c(h_{i\bar{j}}h_{k\bar{l}}+h_{i\bar{l}}h_{k\bar{j}}).\end{align} In this case, $(M,\om)$ is isometric to a quotient of simply-connected complex space forms (see \cite{Z}). However, if $\om$ is not K\"ahler, (\ref{symm}) does not hold any more.  Let
$K$ be the symmetric part of the curvature tensor $R$ with components $$K_{i\bj k\bl}=\frac{1}{4}(R_{{i\bj k\bl}}+R_{{k\bj i\bl}}+R_{{i\bl k\bj}}+R_{{k\bl i\bj}}).$$ In \cite{B}, Balas proved the following proposition. For reader's convenience, we give a proof here.
\begin{prop}[\cite{B}] \label{bal}
For a Hermitian manifold $(M,\om)$, the holomorphic sectional curvature $H=c$ at a point if and only if \begin{align} \label{curvature} K_{i\bar{j}k\bar{l}}=\frac{1}{2}c(h_{i\bar{j}}h_{k\bar{l}}+h_{i\bar{l}}h_{k\bar{j}}).\end{align} 
\end{prop}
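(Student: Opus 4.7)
The plan is to prove this by a clean polarization argument, using that only the totally symmetric part of the Chern curvature tensor contributes to $H$.

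First, I would unwind the definition. For a nonzero $X = X^i\frac{\partial}{\partial z^i} \in T^{1,0}_pM$,
\[
H(X)|X|^4 = R_{i\bj k\bl}\, X^i X^k \bar X^j \bar X^l.
\]
Since $X^iX^k$ is symmetric in $(i,k)$ and $\bar X^j\bar X^l$ is symmetric in $(j,l)$, the contraction only sees the piece of $R_{i\bj k\bl}$ symmetric under swapping $i\leftrightarrow k$ and under swapping $\bj\leftrightarrow\bl$. That symmetrized piece is exactly
\[
K_{i\bj k\bl} = \tfrac14\bigl(R_{i\bj k\bl}+R_{k\bj i\bl}+R_{i\bl k\bj}+R_{k\bl i\bj}\bigr),
\]
so $H(X)|X|^4 = K_{i\bj k\bl}\, X^i X^k \bar X^j \bar X^l$. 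This is the identity that turns a statement about $H$ into a statement about $K$.

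Next, I would evaluate the claimed right-hand side of \eqref{curvature} against the same tuple:
\[
\tfrac{c}{2}\bigl(h_{i\bj}h_{k\bl}+h_{i\bl}h_{k\bj}\bigr) X^i X^k \bar X^j \bar X^l = c\bigl(h_{i\bj}X^i\bar X^j\bigr)^{2} = c\,|X|^4.
\]
Thus the condition $H(X)=c$ for every $0\neq X\in T^{1,0}_pM$ is equivalent to
\[
A_{i\bj k\bl}\, X^i X^k \bar X^j \bar X^l = 0 \quad\text{for all } X,
\qquad A_{i\bj k\bl} := K_{i\bj k\bl} - \tfrac{c}{2}\bigl(h_{i\bj}h_{k\bl}+h_{i\bl}h_{k\bj}\bigr).
\]
By construction, $A$ inherits from both $K$ and $\tfrac{c}{2}(h_{i\bj}h_{k\bl}+h_{i\bl}h_{k\bj})$ the symmetries $A_{i\bj k\bl}=A_{k\bj i\bl}=A_{i\bl k\bj}$.

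The final step is a polarization lemma: the scalar $A_{i\bj k\bl}X^iX^k\bar X^j\bar X^l$ is a polynomial of bidegree $(2,2)$ in $(X,\bar X)$, and under the symmetries above, its components are uniquely recovered from its values. Concretely, I would replace $X$ by $X+sY$ and $X$ by $X+tZ$, expand, and read off the mixed coefficients in $s\bar s$ and $t\bar t$ (alternatively, use that $f(X)=0$ for all $X$ together with the standard identities for polarizing a Hermitian biquadratic form). This forces $A\equiv 0$ at $p$, which is precisely \eqref{curvature}. The converse direction is immediate by contracting \eqref{curvature} against $X^iX^k\bar X^j\bar X^l$ and using the first two steps in reverse.

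The only nontrivial point is the polarization step; once one notes that $K$ and the model tensor already carry the correct $(i,k)$- and $(\bj,\bl)$-symmetries, the argument is just the complex analogue of recovering a symmetric bilinear form from its quadratic form, applied twice (once in the holomorphic and once in the antiholomorphic slots).
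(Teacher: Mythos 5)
Your proof is correct and is essentially the paper's own argument: both replace $R$ by its symmetrization $K$ using the symmetry of $X^iX^k$ and $\bar X^j\bar X^l$, note that $K$ and $\frac{c}{2}(h_{i\bj}h_{k\bl}+h_{i\bl}h_{k\bj})$ share the $(i,k)$- and $(\bj,\bl)$-symmetries, and conclude by polarization that the two tensors agree. You merely spell out the polarization step that the paper leaves implicit.
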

\begin{proof}
For $p\in M$, let $X=X^i \p\in T^{1,0}_pM$. Then $H=c$ at $p$ if and only if
$$R_{i\bar{j}k\bar{l}}X^i\bar{X}^jX^k\bar{X}^l=c|X|^4.$$
We  rewrite the above equality to get \begin{align*}
K_{i\bar{j}k\bar{l}}X^i\bar{X}^jX^k\bar{X}^l=c(h_{i\bj}X^i\bar{X}^j)^2=\frac{1}{2}c(h_{i\bj}h_{k\bl}+h_{i\bl}h_{k\bj})X^i\bar{X}^jX^k\bar{X}^l.
\end{align*}
Note that both $K_{i\bj k\bl} $ and $h_{i\bj}h_{k\bl}+h_{i\bl}h_{k\bj}$ are symmetric in $i$, $k$ and also symmetric in $j$ and $l$. Thus the above equality holds for any $X=X^i \p$ if and only if (\ref{curvature}) is satisfied.
\end{proof}
\noindent Now we prove the following result (Theorem \ref{lck0}).
\begin{thm} \label{lckr} Let $(M,\om)$ be a compact Hermitian manifold. If $\om$ is locally conformal K\"ahler with  pointwise nonpositive constant holomorphic sectional curvature, then $(M, \om)$ is globally conformal K\"ahler.
\end{thm}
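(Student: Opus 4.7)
The plan is to extract, from the pointwise constant holomorphic sectional curvature, a scalar function $F$ on $M$ satisfying a linear transport equation $dF=-F\theta$ along the Lee form $\theta$. On a connected compact manifold such an equation has only two kinds of solutions: either $F$ vanishes nowhere (so $\theta = -d\log|F|$ is exact, making $\om$ globally conformal K\"ahler), or $F\equiv 0$, in which case the nonpositivity of $c$ will be used to force $\tau\equiv 0$ and hence K\"ahlerness.

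First I would set $K_{i\bj k\bl} = \frac{c}{2}(h_{i\bj}h_{k\bl} + h_{i\bl}h_{k\bj})$ via Proposition \ref{bal}, where $c\le 0$ is a function on $M$. Taking the $h^{k\bl}$-trace and $h^{i\bj}$-trace gives
\begin{equation*}
\rho^{(1)}+\rho^{(2)}+\rho^{(3)}+\rho^{(4)} = 2(n+1)c\,\om, \qquad s+\hat s = n(n+1)c.
\end{equation*}
Tracing (\ref{c1}) and (\ref{c2}) in $(k,\bl)$ yields $\rho^{(1)}-\rho^{(4)}=\so\bap\tau$ and $\rho^{(1)}-\rho^{(3)}=-\so\pa\bar\tau$, while Proposition \ref{l3} handles $\rho^{(1)}-\rho^{(2)}$ in the LCK case. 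The LCK identity $d(\tau+\bar\tau)=0$ splits by bidegree into $\pa\tau=0$, $\bap\bar\tau=0$, and $\bap\tau+\pa\bar\tau=0$; in particular $\pa\pa^*\om = -\so\pa\bar\tau$. Substituting these into the sum of the four Chern-Ricci forms consolidates the torsion contributions into a single term and produces
\begin{equation*}
4\rho^{(1)} + \frac{3n-2}{n-1}\so\pa\bar\tau = F\,\om, \qquad F:=2(n+1)c + \frac{\hat s-s}{n-1}.
\end{equation*}

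Next, I would differentiate. We have $d\rho^{(1)}=0$, and $\bap\bar\tau=0$ gives $d(\so\pa\bar\tau)=0$, so $d(F\om)=0$, which expands to $(dF+F\theta)\wg\om=0$. By Lefschetz injectivity of $L:\Omega^1\to\Omega^3$ for $n\ge 2$, this forces
\begin{equation*}
dF=-F\theta.
\end{equation*}
Along any curve this is a linear first-order ODE, so $F$ either never vanishes or vanishes identically. In the first case $\theta=-d\log|F|$ is exact and $\om$ is globally conformal K\"ahler. In the second case, combining $F\equiv 0$ with $s+\hat s = n(n+1)c$ gives $s-\hat s = 2(n^2-1)c$ pointwise; the hypothesis $c\le 0$ yields $s-\hat s\le 0$, while integrating (\ref{scal2}) gives $\int_M(s-\hat s)\,dv=||\tau||^2\ge 0$. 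These inequalities force $\tau\equiv 0$ and $c\equiv 0$, and then $\pa\om=\frac{1}{n-1}\tau\wg\om=0$ shows $\om$ is K\"ahler.

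The main obstacle is the bookkeeping needed to produce the identity $4\rho^{(1)}+\frac{3n-2}{n-1}\so\pa\bar\tau = F\om$: one must track signs carefully in (\ref{c1}), (\ref{c2}), and Proposition \ref{l3}, and exploit all three LCK torsion identities simultaneously so that the torsion contributions of the four Chern-Ricci forms combine into a single closed form proportional to $\so\pa\bar\tau$. The nonpositivity of $c$ enters only at the very end, to rule out the degenerate case $F\equiv 0$.
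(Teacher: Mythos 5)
Your proposal is correct and follows essentially the same route as the paper: the same traced identities $\rho^{(1)}+\rho^{(2)}+2\re\rho^{(3)}=2(n+1)c\,\om$ and $s+\hat s=n(n+1)c$, the same closed form $F\om$ (your $F$ equals the paper's $\vp=(n+1)(n-2)c+2\hat s$ divided by $n-1$), the same dichotomy from $dF=-F\theta$ on a connected manifold, and the same use of $\int_M(s-\hat s)\,dv=\|\tau\|^2\ge 0$ to dispose of the degenerate case. The only (cosmetic) difference is that you handle $F\equiv 0$ in one stroke via $s-\hat s=2(n^2-1)c\le 0$, whereas the paper splits into the subcases $c\equiv 0$ and $c<0$ somewhere.
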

\begin{proof}
Let $H=c$, where $c$ is a nonpositive smooth function on $M$. Contracting (\ref{curvature}) with $\om$ to get (also see \cite{B})
\begin{align}
\label{sum1}\rho^{(1)}+\rho^{(2)}+2Re \rho^{(3)}&=2(n+1)c\om\\
\label{sum2} s+\hat{s}&=n(n+1)c.
\end{align}
As $\om$ is locally conformal K\"ahler, then $\pa\pa^*\om=\bap\bap^*\om$. From (\ref{c2}) and (\ref{pps}), we have
\begin{align} \label{rrr3}\rho^{(1)}=\rho^{(3)}+ \pa\pa^*\om.\end{align}
By Proposition \ref{l3},  (\ref{sum1}) and (\ref{sum2}), we get 
\begin{align}
\label{rr5}\rho^{(3)}=\frac{1}{4(n-1)}[((n+1)(n-2)c+2\hat{s})\om-(n-2)\pa\pa^*\om].
\end{align}
Also, from (\ref{rrr3}), $\rho^{(3)}$ is a real closed $(1,1)$ form. Take differential to (\ref{rr5}) to get
\begin{align}
\label{conformal} d((n+1)(n-2)c+2\hat{s})\om)=0.
\end{align}
Let $\varphi=(n+1)(n-2)c+2\hat{s}$. We will show that $\vp$ is either everywhere nonzero or $\vp\equiv 0$ (see also \cite{Vai}).

(1) If $\vp$ is everywhere nonzero, then $\om$ is conformally K\"ahler and we are done.

(2) Assume that $\vp(p)=0$ for some $p\in M$.  As $\om$ is locally conformal K\"ahler, we have
\begin{align}
\label{dvp}d\vp+\vp\theta=0
\end{align}
 and $d\theta=0$.  Then $\theta=du$ in a neighborhood of $p$  for some real function u.  By (\ref{dvp}), we have $e^ud\vp+\vp de^u=0$.   Then $e^u\vp$ is constant near $p$ and $\vp\equiv 0$  in a neighborhood of $p$. This implies that the set of points where $\vp$ equals zero is open and closed.  Thus $\vp\equiv 0$ on $M$.

We only need to discuss the case $\vp\equiv 0$ to finish the proof.

(a)If $c\equiv 0$, then $s=\hat{s}=0$ by (\ref{sum2}).  This implies that $\om$ is balanced, which means the Lee form $\theta=0$. As $\om$ is also locally conformal K\"{a}hler,  we get $d\om=0$ and $\om$ is K\"{a}hler.

(b) If $c\leq 0$ and $c<0$ at least at one point on $M$.  As $\hat{s}=-\frac{1}{2}(n+1)(n-2)c$, by (\ref{sum2}),  $$s=\frac{1}{2}(n+1)(3n-2)c\leq 0\leq \hat{s}.$$ It follows that $$\int_M\hat{s}dv> 0>\int_Ms dv,$$ which is a contradiction by Proposition \ref{scal2}. 

Thus either $\om $ is K\"{a}hler or $\vp$ is everywhere nonzero. Both imply that $\om$ is globally conformal K\"{a}hler.

\end{proof}

By the uniqueness of Gauduchon metric in each conformal class of a Hermitian metric, we also get  the following corollary.

\begin{cor} \label{lcg} Let $(M,\om)$ be a compact Hermitian manifold. If $\om$ is locally conformal K\"ahler and Gauduchon with pointwise nonpositive constant holomorphic sectional curvature, then $(M, \om)$ is K\"ahler. 
\end{cor}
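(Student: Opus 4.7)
The plan is to combine Theorem \ref{lckr} with the classical uniqueness of the Gauduchon representative in a conformal class. First, I would apply Theorem \ref{lckr} to the metric $\om$: the hypotheses of the corollary are a strict strengthening of those in the theorem, so $\om$ must be globally conformal K\"ahler. This means there exist a K\"ahler metric $\om_0$ on $M$ and a smooth real function $f$ with $\om = e^{2f}\om_0$.

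Next, I would observe that any K\"ahler metric is automatically Gauduchon: $d\om_0=0$ implies $\pa\bap\om_0^{n-1}=0$ trivially. Hence the conformal class of $\om$ contains (at least) two Gauduchon representatives, namely $\om$ itself and $\om_0$. Gauduchon's classical theorem states that on a compact Hermitian manifold of complex dimension $n\geq 2$, every conformal class contains a Gauduchon metric which is unique up to a positive multiplicative constant. Consequently $\om = c\,\om_0$ for some constant $c>0$, and therefore $d\om=0$, i.e.\ $(M,\om)$ is K\"ahler.

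There is no real obstacle here: once Theorem \ref{lckr} is in hand, the corollary is essentially a formal consequence of the uniqueness of Gauduchon metrics. The only minor care is to note that the case $n=2$ still works (the Gauduchon uniqueness theorem is valid there, and Theorem \ref{lckr} also applies since equation (\ref{lck}) holds automatically on Hermitian surfaces as remarked after its statement).
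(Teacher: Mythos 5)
Your proof is correct and is exactly the argument the paper intends: apply Theorem \ref{lckr} to get $\om=e^{f}\om_0$ with $\om_0$ K\"ahler (hence Gauduchon), then invoke the uniqueness, up to a positive constant, of the Gauduchon representative in a conformal class on a compact manifold to conclude $\om=c\,\om_0$ is K\"ahler. The paper states this in one line ("By the uniqueness of Gauduchon metric in each conformal class..."), and your write-up just fills in the same steps explicitly.
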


In particular, any compact Vaisman manifold with pointwise nonpositive constant holomorphic sectional curvature is K\"{a}hler (Corollary \ref{cor1.3}) as it is Gauduchon (\cite{PPS}).

Next we consider holomorphic sectional curvature of globally conformal K\"{a}hler metric. Assume $\tilde{\om}=e^f\om$, where $f$ is a real smooth function. Denote $\widetilde{R}, \ \widetilde{\rho}^{(i)}, \ \widetilde{s}$ and $\ \widetilde{\hat{s}}$ the curvature tensor, Chern-Ricci curvatures and scalar curvatures of $\tilde{\om}$. Direct computation gives that
\begin{align}
\label{313}
\wtl{R}_{i\bj k\bl}=e^{f}(R_{i\bj k\bl}-\pa_i\pa_{\bj}f h_{k\bl}).
\end{align}
Then we obtain
\begin{align}
&\wtl{\rho}^{(1)}=\rho^{(1)}-n\so\pa\bap f,   \notag\\ \notag
&\wtl{\rho}^{(2)}=\rho^{(2)}-\tr_{\om}(\so\pa\bap f)\om,\\
&\wtl{\rho}^{(3)}=\rho^{(3)}-\so\pa\bap f.
\end{align}
It follows that
\begin{align}
\label {ssh2}\wtl{s}-\wtl{\hat{s}}=e^{-f}(s-\hat{s})-(n-1)\tr_{\tl{\om}}(\so\pa\bap f).
\end{align}
We call $\wtl{\om}=e^f\om$ a (globally) conformal K\"{a}hler metric if $\om$ is K\"{a}hler. We prove the following result.

\begin{prop}\label{gck}
 Let $(M,\wtl{\om})$ be a compact conformal K\"ahler manifold. If the holomorphic sectional curvature is nonpositive constant, then $(M, \wtl{\om})$ is K\"ahler.
\end{prop}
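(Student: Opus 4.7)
The plan is to decompose $\wtl\om=e^f\om$ with $\om$ K\"ahler, reduce the constant-$H$ hypothesis to a scalar elliptic PDE for $f$, and then show via a Bochner-type integration that $f$ must be constant. First, by Proposition~\ref{bal}, the assumption $\wtl H\equiv c$ yields $\wtl s+\wtl{\hat s}=n(n+1)c$. Since $\wtl\om$ is in particular LCK with Lee form $\theta=df$, I will recycle the argument in the proof of Theorem~\ref{lckr}: equation (\ref{rr5}) exhibits $\wtl\rho^{(3)}$ as a closed real $(1,1)$-form, and differentiating this identity (using the LCK fact $d(\pa\pa^*\wtl\om)=0$, which follows from $\bap\pa^*\wtl\om=0$) produces $d(\vp\wtl\om)=0$ with $\vp:=(n+1)(n-2)c+2\wtl{\hat s}$. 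Combined with $d\wtl\om=df\wg\wtl\om$ and the injectivity of the Lefschetz operator on $1$-forms (valid for $n\ge 2$), this gives $d\vp+\vp\,df=0$, i.e.\ $d(e^f\vp)=0$; hence $e^f\vp\equiv K$ for some real constant $K$.

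To turn this into a PDE for $f$ alone, I will use (\ref{ssh2}) with the K\"ahler base $\om$ (so $s=\hat s$), obtaining $\wtl s-\wtl{\hat s}=-(n-1)e^{-f}\De_\om f$ where $\De_\om f=\tr_\om(\sdd f)$. On the other hand, $e^f\vp=K$ together with $\wtl s+\wtl{\hat s}=n(n+1)c$ gives $\wtl s-\wtl{\hat s}=2(n^2-1)c-Ke^{-f}$. Equating the two expressions yields the semilinear equation
\begin{equation*}
\De_\om f \;=\; \frac{K}{n-1} - 2(n+1)\,c\,e^f.
\end{equation*}

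To finish, I will integrate this PDE on $(M,\om)$ and use $\int_M \De_\om f\,dv_\om=0$ to pin down $K/(n-1)=2(n+1)c\,\bar e$, where $\bar e$ denotes the $\om$-average of $e^f$. Multiplying the PDE by $e^f$, integrating by parts on the compact K\"ahler manifold $(M,\om)$, and eliminating $K$, I obtain the identity
\begin{equation*}
\int_M e^f\,|\pa f|^2_\om\,dv_\om \;=\; 2(n+1)\,c\int_M \bigl(e^f-\bar e\bigr)^2\,dv_\om.
\end{equation*}
Since $c\le 0$, the right-hand side is nonpositive while the left-hand side is nonnegative, so both must vanish; hence $\pa f\equiv 0$, $f$ is constant on the compact manifold $M$, and $\wtl\om$ is a positive scalar multiple of the K\"ahler form $\om$, proving it is K\"ahler. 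The hardest step will be the first: extracting the global identity $e^f\vp\equiv K$ requires assembling the Ricci identities in Proposition~\ref{l3}, the closedness of $\wtl\rho^{(3)}$, and the Lefschetz injectivity, after which the PDE and the integration by parts are routine.
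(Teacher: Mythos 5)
Your proposal is correct and follows the paper's proof almost verbatim up to the key semilinear equation: both derive $e^f\bigl((n+1)(n-2)c+2\wtl{\hat s}\bigr)=\mathrm{const}$ from the closedness of $\wtl{\rho}^{(3)}$ via Proposition \ref{l3}, combine it with $\wtl{s}+\wtl{\hat s}=n(n+1)c$ and (\ref{ssh2}) (where $s=\hat s$ since $\om$ is K\"ahler), and arrive at exactly the paper's equation (\ref{A}), your $K$ being the paper's $A$. The only divergence is the endgame: the paper splits into the cases $c=0$ and $c<0$, and in the latter derives a second integral identity for $e^{-f}$ and invokes the Cauchy--Schwarz inequality $(\mathrm{vol}\,M)^2\le\int_M e^f\,dv\int_M e^{-f}\,dv$ to force equality; you instead multiply (\ref{A}) by $e^f$, integrate by parts on $(M,\om)$, and obtain the single sign identity $\int_M e^f|\pa f|^2_\om\,dv=2(n+1)c\int_M(e^f-\bar e)^2\,dv$, which kills $\pa f$ at once for all $c\le 0$. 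Your version is slightly cleaner in that it treats both cases uniformly and avoids the equality analysis in Cauchy--Schwarz, but it is a variation on the same argument rather than a different route.
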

\begin{proof}
Denote $\widetilde{H}=c$ the holomorphic sectional curvature of $\wtl{\om}$, where $c$ is a nonpositive constant. The same argument as in Theorem \ref{lckr} gives \begin{align*} 
d((n+1)(n-2)c+2\tilde{\hat{s}})\tilde{\om})=0,\\
\wtl{s}+\wtl{\hat{s}}=n(n+1)c.
\end{align*}
As $\wtl{\om}=e^f\om$  with $d\om=0$, we have
\begin{align}
((n+1)(n-2)c+2\wtl{\hat{s}})e^{f}=A
\end{align} for some constant $A$. Then
\begin{align*}
\wtl{s}-\wtl{\hat{s}}&=n(n+1)c-2\wtl{\hat{s}}\\
&=2(n-1)(n+1)c-Ae^{-f}.
\end{align*}
Putting into (\ref{ssh2}), we get
\begin{align}\label{A}
-(n-1)\Delta f=2(n-1)(n+1)c e^{f}-A,
\end{align}
where $\Delta f=\tr_{\om }\so\pa\bap f.$  Let $dv$ be the volume form of  $\om$, then  $$\int_M\Delta fdv=\int_M\so\pa\bap f\wg\frac{\om^{n-1}}{(n-1)!}=0.$$
If $c=0$, then by (\ref{A}), we see $A=0$ and $f$ is a constant. \\
If $c<0$, first we have
\begin{align}
\label{del1}\wtl{\Delta}f=e^{-f}\Delta f=-\Delta e^{-f}+|\pa f|^2_{\tl{\om}}
\end{align}
where $\wtl{\Delta} f=\tr_{\wtl\om }\so\pa\bap f.$ From (\ref{A}), we get
\begin{align}
\label{del2}2(n-1)(n+1)c-Ae^{-f}=(n-1)(\Delta e^{-f}-|\pa f|^2_{\tl{\om}})
\end{align}
Integrate both sides of (\ref{A}) with respect to the K\"{a}hler metric $\om$, we get
\begin{align*}
&A\ vol(M)=2(n-1)(n+1)c\int_Me^f dv.\end{align*} 
As $c< 0$, we see $A<0$. Then integrate both sides of (\ref{del2}) with respect to $\om$ to get 
\begin{align*} A\int_M e^{-f}dv\geq 2(n-1)(n+1)c \ vol(M)\end{align*}
By Cauchy-Schwarz inequality and the above equations, we have
\begin{align*}
(vol(M))^2\leq \int_M e^fdv \int_M e^{-f}dv\leq (vol(M))^2.
\end{align*}
The equality holds if and only if $f$ is a constant. Therefore in either case, we derive that $\om$ is K\"{a}hler.
\end{proof}

\begin{proof}[Proof of Theorem \ref{thm1}] It follows from Theorem \ref{lckr}, Proposition \ref{gck} and the classical result for complex space forms\cite{H}\cite{I}. \end{proof}

We use Proposition \ref{bal} and the conformal trick to construct complete non-K\"{a}hler metrics on $\mathbb{C}^n$ with pointwise constant but not globally constant holomorphic sectional curvature.  We also give an example of  complete non-K\"{a}hler metrics on $\mathbb{C}^n$ with zero holomorphic sectional curvature and nonvanishing curvature tensor. This gives the proof of Proposition \ref{thm3} and Proposition \ref{prop1.5} .

\begin{exam}
\label{ex1}
Let $\om=\sum_{i=1}^n\sqrt{-1}dz^i\wedge d\bar{z}^i$ be the flat Euclidean metric on $\mathbb{C}^n$ with curvature $R=0$. Let $f=c|z|^2=c\sum_{i=1}^n |z_i|^2$, where $c$ is a nonzero real number and $\tilde{\om}=e^f\om$. Then $\partial_i\pa_{\bj} f=c\delta_{i j}$ and by (\ref{313}), $\tilde{R}_{i\bj k\bl}=-ce^f\delta_{ij}\delta_{kl}$. The symmetric curvature tensor $\wtl{K}$ is

\begin{align*}
\wtl{K}_{i\bj k\bl}&=\dfrac{1}{4}(\tilde{R}_{i\bj k\bl}+\tilde{R}_{k\bj i\bl}+\tilde{R}_{i\bl k\bj}+\tilde{R}_{k\bl i\bj})\\
&=-\dfrac{ce^f}{2}(\delta_{ij}\delta_{kl}+\delta_{il}\delta_{kj})\\
&=-\dfrac{ce^{-f}}{2}(\tilde{h}_{i\bj}\tilde{h}_{k\bl}+\tilde{h}_{i\bl}\tilde{h}_{k\bj}).
\end{align*}
By Proposition \ref{bal}, the holomorphic sectional curvature $\wtl{H}=-ce^{-f}$ is pointwise constant but not globally constant. When $c>0$, we see that $\tilde{\om}=e^{c|z|^2}\om$ is complete. 
\end{exam}
\begin{exam}
\label{ex2}
Let $\om=\sqrt{-1}\pa\bap \log (1+|z|^2)$ be the restriction of the Fubini-Study metric on $\mathbb{C}^n$. Then $$h_{i\bj}=\dfrac{(1+|z|^2)\delta_{ij}-\bar{z}_iz_j}{(1+|z|^2)^2}.$$
Also, the holomorphic sectional curvature of $\om$ is constant 2 (\cite{Z}), so we have $$R_{i\bj k\bl}=h_{i\bj}h_{k\bl}+h_{i\bl}h_{k\bj}.$$
Let $f=2\log (1+|z|^2)$ and $\tilde{\om}=e^f\om$. As $\partial_i\pa_{\bj} f=2h_{i \bj}$, we have
$$\widetilde{R}_{i\bj k\bl}=e^f(h_{i\bj}h_{k\bl}+h_{i\bl}h_{k\bj}-2h_{i\bj}h_{k\bl})=e^f(h_{i\bl}h_{k\bj}-h_{i\bj}h_{k\bl}).$$
The symmetric curvature tensor is \begin{align*}
\widetilde{K}_{i\bj k\bl}=\dfrac{1}{4}(\tilde{R}_{i\bj k\bl}+\tilde{R}_{k\bj i\bl}+\tilde{R}_{i\bl k\bj}+\tilde{R}_{k\bl i\bj})=0.
\end{align*}
So the holomorphic sectional curvature of $\tilde{\om}$ is zero, but the curvature is nowhere vanishing. Also, $\tilde{\om}=\so ((1+|z|^2)\delta_{ij}-\bar{z}_iz_j)dz^i\wedge d\bz^j$ is complete on $\mathbb{C}^n$. 
\end{exam}
\begin{exam}
\label{ex3}
 let $\om=-\sqrt{-1}\pa\bap \log (1-|z|^2)$ be the Bergman metric on the unit ball $\mathbb{B}^n$. Then $$h_{i\bj}=\dfrac{(1-|z|^2)\delta_{ij}+\bar{z}_iz_j}{(1-|z|^2)^2}.$$
Also, the holomorphic sectional curvature of $\om$ is constant -2. Then we have $$R_{i\bj k\bl}=-(h_{i\bj}h_{k\bl}+h_{i\bl}h_{k\bj}).$$
Let $f=2\log (1-|z|^2)$ and $\tilde{\om}=e^f\om$. Similarly, 
$$\widetilde{R}_{i\bj k\bl}=e^f(-h_{i\bj}h_{k\bl}-h_{i\bl}h_{k\bj}+2h_{i\bj}h_{k\bl})=e^f(h_{i\bj}h_{k\bl}-h_{i\bl}h_{k\bj}).$$
The symmetric curvature tensor
$\widetilde{K}_{i\bj k\bl}=0$.
So the holomorphic sectional curvature of $\tilde{\om}$ is zero, but the curvature is nonzero everywhere. As $\tilde{h}_{i\bj}=(1-|z|^2)\delta_{ij}+\bar{z}_iz_j$, $\tilde{\om}$ is not complete. The completion is $\overline{\mathbb{B}^n}$.
\end{exam}
\section{k-Gauduchon metrics}

In this section, we study the $k$-Gauduchon metrics on complex manifolds.

Recall that a Gauduchon metric (or called standard metric) is a Hermitian metric satisfying $\pa\bap \om^{n-1}=0$. On a compact complex manifold, there exist a unique Gauduchon metric  in the conformal class of any Hermitian metric \cite{G1}. For $1\leq k\leq n-1$, Fu-Wang-Wu consider the following equation in \cite{FW}
\begin{align} \label{equation}
 \pa\bap \om^{k}\wedge \om^{n-k-1}=0.\end{align}
A Hermitian metric is called $k$-Gauduchon if (\ref{equation}) is satisfied. We use the torsion 1-form  and operators on Hermitian manifolds to study $k$-Gauduchon metrics. First, direct computation gives
\begin{align}
\label{pak}
\pa\ov{\pa}\om^k\wg\om^{n-1-k}=\dfrac{k}{n-1}\pa\ov{\pa}\om^{n-1}-k(n-k-1)\pa\om\wg\ov{\pa}\om\wg\om^{n-3}.
\end{align}

\begin{lem}
Let $(M,\om)$ be a compact Hermitian manifold. Then\label{first}
	\begin{align*}
	*(\so\pa\bar{\pa}\om^{n-1})=(n-1)!(\hat{s}-s+|\pa^*\om|^2 ).
    \end{align*}
So $\om$ is Gauduchon if and only if $s-\hat{s}=|\pa^*\om|^2$.
\end{lem}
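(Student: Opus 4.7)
The plan is to write $\so\pa\bap\om^{n-1}$ as an explicit sum of top-degree forms and then apply the Hodge star. The natural starting point is (\ref{lee}), $\pa\om^{n-1}=\tau\wg\om^{n-1}$, together with its complex conjugate $\bap\om^{n-1}=\ov{\tau}\wg\om^{n-1}$.

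Differentiating gives $\pa\bap\om^{n-1}=\pa\ov{\tau}\wg\om^{n-1}-\ov{\tau}\wg\pa\om^{n-1}=\pa\ov{\tau}\wg\om^{n-1}+\tau\wg\ov{\tau}\wg\om^{n-1}$, where the sign in the last step comes from $\ov{\tau}\wg\tau=-\tau\wg\ov{\tau}$. From (\ref{tau1}) and the reality of $\om$ one reads off $\ov{\tau}=\so\,\pa^*\om$ (since $\ov{\bap^*\om}=\pa^*\om$), which gives $\so\,\pa\ov{\tau}=-\pa\pa^*\om$. Substituting,
\begin{align*}
\so\,\pa\bap\om^{n-1}=-\pa\pa^*\om\wg\om^{n-1}+\so\,\tau\wg\ov{\tau}\wg\om^{n-1},
\end{align*}
so the problem reduces to computing $*$ on two products of $(1,1)$-forms with $\om^{n-1}$. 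For any $(1,1)$-form $\alpha$ one has $\alpha\wg\om^{n-1}=(n-1)!\,\langle\alpha,\om\rangle\,dv$, hence $*(\alpha\wg\om^{n-1})=(n-1)!\,\langle\alpha,\om\rangle$. Taking $\alpha=\pa\pa^*\om$, the pointwise identity (\ref{scal}) gives $\langle\pa\pa^*\om,\om\rangle=s-\hat{s}$; taking $\alpha=\so\,\tau\wg\ov{\tau}$, a short local-coordinate computation gives $\langle\so\,\tau\wg\ov{\tau},\om\rangle=h^{i\bj}\tau_i\ov{\tau_j}=|\tau|^2=|\pa^*\om|^2$. Combining yields exactly $*(\so\pa\bap\om^{n-1})=(n-1)!(\hat{s}-s+|\pa^*\om|^2)$, and the Gauduchon characterization is then immediate, since $\pa\bap\om^{n-1}=0$ is equivalent to the right-hand side vanishing, i.e.\ to $s-\hat{s}=|\pa^*\om|^2$.

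The main obstacle I anticipate is sign/factor bookkeeping. The relation $\ov{\tau}=\so\,\pa^*\om$ carries a factor of $\so$ that must combine with the external $\so$ to flip a sign and produce $-\pa\pa^*\om$, while the anti-commutation $\ov{\tau}\wg\tau=-\tau\wg\ov{\tau}$ is precisely what turns the final formula into $+|\pa^*\om|^2$ rather than $-|\pa^*\om|^2$; mishandling either flips a sign. Apart from these routine commutations, no input beyond (\ref{lee}), (\ref{tau1}), and (\ref{scal}) is needed, so I do not expect any serious analytic difficulty.
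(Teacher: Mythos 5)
Your proof is correct and rests on the same ingredients as the paper's: the identity $*\om=\om^{n-1}/(n-1)!$ together with (\ref{lee}), (\ref{tau1}) and the pointwise identity $\langle\pa\pa^*\om,\om\rangle=s-\hat{s}$. The paper gets there slightly faster by writing $*\pa\bap\om^{n-1}=(n-1)!\,\pa^*\bap^*\om=(n-1)!\so\pa^*\tau$ and quoting (\ref{scal2}); your Leibniz-rule splitting into $-\pa\pa^*\om\wg\om^{n-1}+\so\tau\wg\ov{\tau}\wg\om^{n-1}$ just unpacks by hand the content of that cited identity, with all signs handled correctly.
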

\begin{proof}
	As $*\om=\dfrac{\om^{n-1}}{(n-1)!}$, we have \begin{align*}
*\pa\ov{\pa}\om^{n-1}=(n-1)!\pa^*\ov{\pa}^*\om=(n-1)!\so\pa^*\tau.\end{align*}
Then it follows from (\ref{scal2}).
\end{proof}
We will assume $n\geq 3$ in the following discussion.
\begin{lem}
Let $(M,\om)$ be a compact Hermitian manifold. Then
 \label{second}
	\begin{align}
	*(\so\pa\om\wg\bar\pa\om\wg\frac{\om^{n-3}}{(n-3)!})=|\pa^*\om|^2-|\pa\om|^2,
	\end{align}
 or equivalently ,
\begin{align*}
	\Lambda^3(\sqrt{-1}\pa\om\wg\bar{\pa}\om)=6({|\pa^*\om|}^2-{|\pa\om|}^2).
	\end{align*}
	\end{lem}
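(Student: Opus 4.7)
The plan is to prove the first identity via the pointwise Lefschetz decomposition of $\pa\om$; the equivalent $\La^3$-form will then follow by a purely formal translation between $*$ and powers of $\La$ on $(3,3)$-forms. First, as in the proof of (\ref{lee}), I would write
$$\pa\om=\frac{1}{n-1}L\tau+\al_0,\qquad \La\al_0=0,$$
using $\La\pa\om=\tau$ from (\ref{tau0}). The two summands are pointwise orthogonal because $\langle L\tau,\al_0\rangle=\langle\tau,\La\al_0\rangle=0$, and (\ref{eta}) applied to the primitive $(1,0)$-form $\tau$ gives $|L\tau|^2=|\tau\wg\om|^2=(n-1)|\tau|^2$. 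Hence $|\pa\om|^2=\frac{|\tau|^2}{n-1}+|\al_0|^2$. Combined with $|\pa^*\om|^2=|\tau|^2$, read off from $\pa^*\om=-\so\bar\tau$ (the conjugate of (\ref{tau1})), the right-hand side of the lemma becomes
$$|\pa^*\om|^2-|\pa\om|^2=\frac{n-2}{n-1}|\tau|^2-|\al_0|^2.$$

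Next, I would substitute the decomposition into $\so\pa\om\wg\bap\om\wg\om^{n-3}/(n-3)!$ and analyze the four resulting pieces. The two cross terms reduce respectively to scalar multiples of $\tau\wg(\bar\al_0\wg\om^{n-2})$ and $\bar\tau\wg(\al_0\wg\om^{n-2})$, and each vanishes by the standard characterization $L^{n-k+1}\vp=0$ for primitive $k$-forms, applied to $\al_0,\bar\al_0$ with $k=3$. The $L\tau$-piece should simplify to
$$\frac{1}{(n-1)^2}\,\so\tau\wg\bar\tau\wg\frac{\om^{n-1}}{(n-3)!}=\frac{n-2}{n-1}|\tau|^2\,dv,$$
using the scalar identity $\so\tau\wg\bar\tau\wg\om^{n-1}/(n-1)!=|\tau|^2\,dv$, which itself comes from $\tau\wg *\bar\tau=|\tau|^2\,dv$ together with the Hodge $*$-formula $*\bar\tau=\so\bar\tau\wg\om^{n-1}/(n-1)!$ for the primitive $(0,1)$-form $\bar\tau$. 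The $\al_0$-piece will be treated analogously via $*\al_0=\so L^{n-3}\al_0/(n-3)!$ for the primitive $(2,1)$-form $\al_0$, producing $\so\al_0\wg\bar\al_0\wg\om^{n-3}/(n-3)!=-|\al_0|^2\,dv$. Summing the three nonzero contributions matches the right-hand side computed above, and applying $*$ gives the first claimed identity.

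The equivalence with the $\La^3$-version is formal: for any $(3,3)$-form $\Phi$ one has $\Phi\wg\om^{n-3}/(n-3)!=\La^3\Phi\cdot\om^n/(3!\,n!)$, proved by a short induction from (\ref{cmr}), while $*(\om^n/n!)=1$. I expect the main obstacle to be the sign and $\so$ bookkeeping in the Hodge $*$-formulas on primitive $(2,1)$- and $(1,2)$-forms: the coefficient $(-1)^{k(k+1)/2}\so^{p-q}/(n-k)!$ at $k=3$ must be tracked exactly, and a single slip flips the sign of the final identity. The only further conceptual ingredient is the primitivity characterization $L^{n-k+1}\vp=0$, invoked to kill the cross terms; this is standard Lefschetz theory and should cause no trouble.
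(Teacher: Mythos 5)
Your proposal is correct and rests on essentially the same ingredients as the paper's proof: both use the Lefschetz decomposition of $\pa\om$ with primitive part $\al_0=\pa\om-\frac{1}{n-1}L\tau$ (the paper phrases this as the primitivity of $\pa\om+\frac{\so}{n-1}L\bap^*\om$) together with the Hodge star formula (\ref{star}) for primitive forms. The only difference is bookkeeping — the paper computes $*\pa\om$ explicitly and then pairs it with $\pa\om$, whereas you expand $\so\pa\om\wg\bap\om\wg\om^{n-3}$ into four terms and evaluate them via orthogonality and the vanishing $\al_0\wg\om^{n-2}=0$ — and your sign and coefficient computations all check out.
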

	\begin{proof}
	
First, as $\La \pa\om=-\so \bap^*\om$ and $\La L\bap^*\om=(n-1)\bap^*\om$, we see the $(2,1)$ form $$\pa \om+\dfrac{\so}{n-1} L\bap^*\om$$ is primitive. By Proposition 6.29 in \cite{V}, for a primitive $\al\in\Om^{p,q}$ with $p+q=k,$
\begin{align}
\label{star}*\al=(-1)^p(\sqrt{-1})^{k^2}\frac{L^{n-k}\al}{(n-k)!}.
\end{align}
So		
	\begin{align}
	*\pa\om=\sqrt{-1}\pa\om\wg\frac{\om^{n-3}}{(n-3)!}-\frac{n-2}{(n-1)!}{\bar\pa}^*\om\wg\om^{n-2}-\frac{\so}{n-1}*L\bap^*\om
			\end{align}
By  $*L=\Lambda*$ and (\ref{star}),
\begin{align*}	
\frac{\sqrt{-1}}{n-1}*L{\bar\pa}^*\om
	=&\frac{1}{n-1}\Lambda({\bar\pa}^*\om\wg\frac{\om^{n-1}}{(n-1)!})\\
	=&{\bar\pa}^*\om\wg\frac{\om^{n-2}}{(n-1)!},
\end{align*}
where we use  (\ref{lr}) in the last equality.\\
Therefore,
\begin{align}
*\pa\om=&\sqrt{-1}\pa\om\wg\frac{\om^{n-3}}{(n-3)!}-{\bar\pa}^*\om\wg\frac{\om^{n-2}}{(n-2)!}.
\end{align}
Thus with $*\bap^*\om=\dfrac{\pa\om^{n-1}}{(n-1)!}$, we get
	\begin{align*}
	{|\pa\om|}^2dv=&-\sqrt{-1}\pa\om\wg\bar\pa\om\wg\frac{\om^{n-3}}{(n-3)!}-\pa\om\wg{\pa}^*\om\wg\frac{\om^{n-2}}{(n-2)!}\\
	=&-\sqrt{-1}\pa\om\wg\bar\pa\om\wg\frac{\om^{n-3}}{(n-3)!}+	{|\pa^*\om|}^2dv.
	\end{align*}
	Consequently,
	\begin{align}
	*(\sqrt{-1}\pa\om\wg\bar\pa\om\wg\frac{\om^{n-3}}{(n-3)!})={|\pa^*\om|}^2-{|\pa\om|}^2. \label{gaudu2}
	\end{align}
	\end{proof}
\noindent Then we have the following lemma. 	
\begin{lem}\label{3rd}
Let $(M,\om)$ be a compact Hermitian manifold and $k$ be an integer such that $1\leq k\leq n-1$. Then
\begin{align}
\notag&*(\so\pa\ov{\pa}\om^k\wg \om^{n-k-1})\\
=&k(n-2)!\left(\frac{k-1}{n-2}|\pa^*\om|^2+\frac{n-k-1}{n-2}|\pa\om|^2-s+\hat{s}\right)
\end{align}
\end{lem}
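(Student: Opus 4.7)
The plan is to combine the algebraic decomposition (\ref{pak}) with the two preceding lemmas (Lemma \ref{first} and Lemma \ref{second}), then do a short arithmetic simplification.

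First, I would apply the operator $\sqrt{-1}*$ to both sides of the identity
\begin{align*}
\pa\bap\om^k\wg\om^{n-1-k}=\dfrac{k}{n-1}\pa\bap\om^{n-1}-k(n-k-1)\pa\om\wg\bap\om\wg\om^{n-3},
\end{align*}
which is (\ref{pak}) and is available as ``direct computation.'' Since $*$ is $\mathbb{C}$-linear, this yields
\begin{align*}
*(\so\pa\bap\om^k\wg\om^{n-1-k})=\dfrac{k}{n-1}*(\so\pa\bap\om^{n-1})-k(n-k-1)*(\so\pa\om\wg\bap\om\wg\om^{n-3}).
\end{align*}

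Next I would substitute the two earlier lemmas. By Lemma \ref{first},
\begin{align*}
\dfrac{k}{n-1}*(\so\pa\bap\om^{n-1})=\dfrac{k}{n-1}(n-1)!(\hat s-s+|\pa^*\om|^2)=k(n-2)!(\hat s-s+|\pa^*\om|^2).
\end{align*}
By Lemma \ref{second},
\begin{align*}
k(n-k-1)*(\so\pa\om\wg\bap\om\wg\om^{n-3})=k(n-k-1)(n-3)!(|\pa^*\om|^2-|\pa\om|^2).
\end{align*}

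The final step is a one-line algebraic cleanup: combining the $|\pa^*\om|^2$ terms gives the coefficient
\begin{align*}
k(n-2)!-k(n-k-1)(n-3)!=k(n-3)!\bigl[(n-2)-(n-k-1)\bigr]=k(n-3)!(k-1)=k(n-2)!\cdot\dfrac{k-1}{n-2},
\end{align*}
and the $|\pa\om|^2$ terms contribute $k(n-k-1)(n-3)!=k(n-2)!\cdot\frac{n-k-1}{n-2}$, while the $(\hat s-s)$ term retains the factor $k(n-2)!$. Assembling these matches the right hand side of the lemma.

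There is no real obstacle here: (\ref{pak}) is a formal identity on $(p,q)$-forms and the content of the lemma is entirely encoded in Lemmas \ref{first} and \ref{second}. The only thing to be careful with is the arithmetic matching of factorials and the signs inherited from Lemma \ref{first} (note that the statement involves $\hat s - s$, not $s-\hat s$). Once those bookkeeping details are confirmed, the proof is a two-line substitution.
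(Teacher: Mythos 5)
Your proof is correct and follows essentially the same route as the paper, which assembles the lemma from (\ref{pak}), Proposition \ref{scal2} and Lemma \ref{second}; your substitution of Lemma \ref{first} in place of Proposition \ref{scal2} is only a cosmetic difference, since Lemma \ref{first} is itself the packaged form of that proposition. The factorial bookkeeping in your final step checks out.
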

\begin{proof}
It is a combination of (\ref{pak}), Proposition \ref{scal2}, and Lemma \ref{second}.
\end{proof}

\begin{remk}
An integral version of the above two lemmas is given by Proposition 5.1 \cite{LY2}. We are curious about the pointwise equality and do the above calculation.  Similar formulas are also obtained in \cite{FZ} and \cite{IP}.
\end{remk}
Consequently, we have the following characterization.
\begin{prop}
Let $(M,\om)$ be a Hermitian manifold and $k$ be an integer such that $1\leq k\leq n-1$. Then the following are equivalent:
\begin{itemize}
\label{kg1}
\item[(1)] $\om$ is $k$-th Gauduchon;\\
\item[(2)] $\Lambda^{k+1}(\so\pa\bar\pa\om^k)=0$;\\
\item[(3)]$s-\hat{s}=\dfrac{k-1}{n-2}|\pa^*\om|^2+\dfrac{n-1-k}{n-2}|\pa\om|^2$.
\end{itemize}
\end{prop}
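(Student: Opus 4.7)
The plan is to reduce the three-way equivalence to two short steps: first use Lemma \ref{3rd} to handle $(1) \iff (3)$ almost verbatim, and then establish a pointwise identity converting the wedge condition into the contraction condition to obtain $(1) \iff (2)$. Since the real work has already been packaged into Lemma \ref{3rd}, the new content is the algebraic identity relating $L^{n-k-1}\eta$ and $\Lambda^{k+1}\eta$ for a $(k{+}1,k{+}1)$-form $\eta$.

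For $(1) \iff (3)$, apply Lemma \ref{3rd} pointwise: it asserts
\begin{align*}
*(\so\pa\bap\om^k\wg\om^{n-k-1})
 = k(n-2)!\left(\frac{k-1}{n-2}|\pa^*\om|^2+\frac{n-k-1}{n-2}|\pa\om|^2-s+\hat{s}\right).
\end{align*}
Since the Hodge $*$ is an isomorphism onto functions when applied to top forms, the left-hand side vanishes identically if and only if $\so\pa\bap\om^k\wg\om^{n-k-1}=0$, and the vanishing of the right-hand side is precisely (3). So $(1)\Leftrightarrow(3)$ is immediate.

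For $(1) \iff (2)$, I will derive the pointwise formula
\begin{align*}
\eta\wg\frac{\om^{n-k-1}}{(n-k-1)!}=\frac{1}{(k+1)!}\,\Lambda^{k+1}\eta\cdot dv,
\qquad \eta\in\Om^{k+1,k+1}M.
\end{align*}
The derivation uses the adjointness $\ag L\al,\beta\rg=\ag\al,\La\beta\rg$ iterated $k{+}1$ times, together with the identity $*\frac{\om^{k+1}}{(k+1)!}=\frac{\om^{n-k-1}}{(n-k-1)!}$ (a direct consequence of $*\om=\om^{n-1}/(n-1)!$ and $\La L$ computations from (\ref{lr})). Explicitly, using $\ov{\om}=\om$,
\begin{align*}
\eta\wg\frac{\om^{n-k-1}}{(n-k-1)!}
=\eta\wg *\,\overline{\tfrac{\om^{k+1}}{(k+1)!}}
=\ag\eta,\tfrac{\om^{k+1}}{(k+1)!}\rg dv
=\tfrac{1}{(k+1)!}\ag\eta,L^{k+1}\mathbf 1\rg dv
=\tfrac{1}{(k+1)!}\Lambda^{k+1}\eta\cdot dv,
\end{align*}
since $\La^{k+1}\eta$ is a function and $\ag\La^{k+1}\eta,1\rg=\La^{k+1}\eta$. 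Applying this with $\eta=\so\pa\bap\om^k$ shows that $\so\pa\bap\om^k\wg\om^{n-k-1}\equiv 0$ if and only if $\La^{k+1}(\so\pa\bap\om^k)\equiv 0$, yielding $(1)\Leftrightarrow(2)$.

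The only potential obstacle is bookkeeping: one has to verify that $*\frac{\om^{k+1}}{(k+1)!}=\frac{\om^{n-k-1}}{(n-k-1)!}$, which can be read off from the fact that $\frac{\om^j}{j!}$ is primitive times the right power of $L$ inside the range where $L$ is injective—alternatively, check it for $j=0$ (where both sides equal $\om^n/n!$ and $1$ respectively, up to the normalization $dv=\om^n/n!$) and propagate by applying $L$ and using $*L=\La*$. With this identity in hand, the proof is complete.
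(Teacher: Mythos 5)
Your proposal is correct and follows essentially the same route as the paper: the equivalence $(1)\Leftrightarrow(3)$ is read off from Lemma \ref{3rd} since $*$ is an isomorphism, and $(1)\Leftrightarrow(2)$ comes from the pointwise identity $\so\pa\bap\om^k\wg\frac{\om^{n-k-1}}{(n-k-1)!}=\frac{1}{(k+1)!}\La^{k+1}(\so\pa\bap\om^k)\,dv$, which the paper obtains from $*\om^{k}=\frac{k!\,\om^{n-k}}{(n-k)!}$ and you derive equivalently via the $L$--$\La$ adjointness. Your write-up merely spells out the derivation of that contraction identity in more detail than the paper does.
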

\begin{proof}
As $*\om^k=\dfrac{k!\om^{n-k}}{(n-k)!}$ (see e.g., \cite{Z}),  we have
\begin{align}\label{contrk}
\frac{1}{(k+1)!}\La^{k+1}(\so\pa\bap\om^k)=\so\pa\bap\om^k\wg\frac{\om^{n-k-1}}{(n-k-1)!}.
\end{align}
Then it follows from Lemma \ref{3rd}.

\end{proof}
\begin{cor} \label{cor4.6}
If $(M,\om)$ is $k$-Gauduchon, then $s\geq \hat{s}$. In particular, if $(M,\om)$ is pluriclosed or Astheno-K\"{a}hler, then $s\geq \hat{s}$.
\end{cor}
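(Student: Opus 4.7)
The plan is to read off the inequality directly from the equivalent characterization established in Proposition \ref{kg1}(3). The hypothesis $1\le k\le n-1$ together with $n\ge 3$ forces both coefficients $\dfrac{k-1}{n-2}$ and $\dfrac{n-1-k}{n-2}$ on the right-hand side of
\begin{align*}
s-\hat{s}=\dfrac{k-1}{n-2}|\pa^*\om|^2+\dfrac{n-1-k}{n-2}|\pa\om|^2
\end{align*}
to be nonnegative. Since $|\pa^*\om|^2$ and $|\pa\om|^2$ are pointwise nonnegative, this immediately gives $s\ge \hat{s}$ on $M$.

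For the second assertion, I would simply check that each of the two classes named is a special case of the $k$-Gauduchon condition. If $\om$ is pluriclosed, then $\pa\bap\om=0$, so wedging with $\om^{n-2}$ gives $\so\pa\bap\om\wg\om^{n-2}=0$, i.e.\ $\om$ is $1$-Gauduchon; here $k=1$, the first coefficient vanishes, and the formula reduces to $s-\hat{s}=|\pa\om|^2\ge 0$. If $\om$ is Astheno-K\"ahler, then $\pa\bap\om^{n-2}=0$, and wedging with $\om=\om^{n-(n-2)-1}$ still gives zero, so $\om$ is $(n-2)$-Gauduchon; here $k=n-2$, the second coefficient vanishes, and the formula reduces to $s-\hat{s}=\dfrac{n-3}{n-2}|\pa^*\om|^2\ge 0$.

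There is no real obstacle here: the corollary is an immediate sign observation once Proposition \ref{kg1} has been established, and the only thing to verify is that the endpoint cases $k=1$ (pluriclosed) and $k=n-2$ (Astheno-K\"ahler) are genuinely covered by the range $1\le k\le n-1$, which holds for $n\ge 3$. (In the degenerate case $n=3$, pluriclosed and Astheno-K\"ahler coincide and the conclusion reads $s-\hat{s}=|\pa\om|^2=|\pa^*\om|^2$, consistent with both specializations.)
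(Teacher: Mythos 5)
Your proposal is correct and is exactly the argument the paper intends: the corollary is read off from Proposition \ref{kg1}(3) by observing that both coefficients $\frac{k-1}{n-2}$ and $\frac{n-1-k}{n-2}$ are nonnegative for $1\le k\le n-1$, $n\ge 3$, and that pluriclosed and Astheno-K\"ahler metrics are $1$-Gauduchon and $(n-2)$-Gauduchon respectively. One small slip in your side computation: for $k=n-2$ the second coefficient is $\frac{n-1-(n-2)}{n-2}=\frac{1}{n-2}$, which does \emph{not} vanish, so the Astheno-K\"ahler specialization reads $s-\hat{s}=\frac{n-3}{n-2}|\pa^*\om|^2+\frac{1}{n-2}|\pa\om|^2$ (and at $n=3$ this is just $|\pa\om|^2$, matching the pluriclosed case with no need for the unjustified identity $|\pa\om|^2=|\pa^*\om|^2$); this does not affect the conclusion $s\ge\hat{s}$.
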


\begin{cor} \label{k-gau}
Let $(M,\om)$ be a compact Hermitian manifold and $k$ be an integer such that $1\leq k\leq n-1$. Then the following are equivalent:

\begin{itemize}
\item[(1)] $\om$ is $k$-Gauduchon for all $k, \ 1\leq k\leq n-1$;\\[-.2em]
\item[(2)] $|\pa\om|^2=|\pa^*\om|^2=\ag\pa\pa^*\om, \om\rg$.
\end{itemize}
\end{cor}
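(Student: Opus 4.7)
The plan is to deduce the corollary as a direct application of Proposition \ref{kg1}(3), combined with the identity $s-\hat{s}=\langle \pa\pa^*\om,\om\rangle$ from Proposition \ref{scal2}. Observe that for each $k$ with $1\le k\le n-1$, the coefficients $\frac{k-1}{n-2}$ and $\frac{n-1-k}{n-2}$ appearing in Proposition \ref{kg1}(3) are nonnegative and sum to $1$, so the right-hand side is a convex combination of $|\pa^*\om|^2$ and $|\pa\om|^2$. Being $k$-Gauduchon for every such $k$ therefore forces this convex combination to equal the same quantity $\langle\pa\pa^*\om,\om\rangle$ for every weight, which is exactly the condition that the two norms coincide.

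For the implication $(1)\Rightarrow (2)$, I would simply evaluate Proposition \ref{kg1}(3) at the two extreme values $k=1$ and $k=n-1$. Taking $k=1$ gives $s-\hat{s}=|\pa\om|^2$, and taking $k=n-1$ gives $s-\hat{s}=|\pa^*\om|^2$, both as pointwise identities. Combined with $s-\hat{s}=\langle\pa\pa^*\om,\om\rangle$ from Proposition \ref{scal2}, this yields
\begin{align*}
|\pa\om|^2=|\pa^*\om|^2=\langle\pa\pa^*\om,\om\rangle.
\end{align*}

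For the converse $(2)\Rightarrow (1)$, assume $|\pa\om|^2=|\pa^*\om|^2=\langle\pa\pa^*\om,\om\rangle$. Then for any $1\le k\le n-1$,
\begin{align*}
\frac{k-1}{n-2}|\pa^*\om|^2+\frac{n-1-k}{n-2}|\pa\om|^2=|\pa\om|^2=\langle\pa\pa^*\om,\om\rangle=s-\hat{s},
\end{align*}
so Proposition \ref{kg1}(3) is satisfied for every such $k$, and $\om$ is $k$-Gauduchon for all $k$.

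There is no real obstacle here: the corollary is essentially a linear-algebra consequence of Proposition \ref{kg1}. The only minor point worth flagging is that the whole argument relies on $n\ge 3$ so that the denominator $n-2$ and the two endpoints $k=1$, $k=n-1$ are meaningful and distinct; this is consistent with the standing assumption preceding Proposition \ref{kg1}.
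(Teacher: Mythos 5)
Your proof is correct and follows essentially the same route as the paper, which simply cites Lemma \ref{first} (the $k=n-1$, i.e.\ Gauduchon, case) together with Proposition \ref{kg1}; evaluating Proposition \ref{kg1}(3) at the two endpoints $k=1$ and $k=n-1$ and invoking $s-\hat{s}=\langle\pa\pa^*\om,\om\rangle$ from Proposition \ref{scal2} is exactly the intended argument, and your convex-combination remark for the converse is the same observation made explicit.
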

\noindent This follows from Lemma \ref{first} and Proposition \ref{kg1}.

\begin{remk}
Note that \begin{align*}
\so\pa\ov{\pa}\om^k\wg\om^{n-k-1}=k(\pa\ov{\pa}\om\wg\om^{n-2}+(k-1)\pa \om\wg\ov{\pa}\om\wg\om^{ n-3}).
\end{align*}
So for $1\leq p, q\leq n-1$, if $\om$ is $p$-Gauduchon and $q$-Gauduchon for $p\neq q$, then $\om$ is $k$-Gauduchon for all $k$.  \end{remk}

Using Proposition 4.5, we are able to obtain the following result which gives a slight generalization of Proposition 3.8 in \cite{IP}. 
\begin{prop} \label{k-gau2}
Let $(M,\om)$ be a compact Hermitian manifold and $k$ be an integer such that $1\leq k\leq n-2$. If $\om$ is locally conformal K\"ahler satisfying
\begin{align}
\label{conk}\int_M \La^{k+1}(\so \pa \bap \om^k)dv=0,
\end{align}
then $(M,\om)$ is K\"ahler.
\end{prop}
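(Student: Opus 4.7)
The plan is to reduce the integral hypothesis to an identity in $|\tau|^2$, where $\tau$ is the torsion $1$-form, and then invoke the LCK structure plus equation (\ref{lee}) to conclude that $\tau \equiv 0$, which is precisely the K\"ahler condition in the LCK setting.

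First I would translate the hypothesis. From (\ref{contrk}) the $(n,n)$-form identity
\[
\frac{1}{(k+1)!}\Lambda^{k+1}(\sqrt{-1}\pa\bap\om^k)\cdot\frac{\om^n}{n!}
=\sqrt{-1}\pa\bap\om^k\wedge\frac{\om^{n-k-1}}{(n-k-1)!}
\]
lets me rewrite $\int_M\Lambda^{k+1}(\sqrt{-1}\pa\bap\om^k)\,dv=0$ as $\int_M *(\sqrt{-1}\pa\bap\om^k\wedge\om^{n-k-1})\,dv=0$. Then Lemma \ref{3rd} (the pointwise computation of $*$) converts this into
\[
\int_M\Big((k-1)|\pa^*\om|^2+(n-k-1)|\pa\om|^2-(n-2)(s-\hat{s})\Big)\,dv=0.
\]

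Next I exploit that $\om$ is LCK. By (\ref{lck}), $\pa\om=\tfrac{1}{n-1}\tau\wedge\om$, and since $\tau$ is a $(1,0)$-form it is automatically primitive, so (\ref{eta}) gives $|\pa\om|^2=\tfrac{1}{n-1}|\tau|^2$. Together with $|\pa^*\om|^2=|\tau|^2$ (from $\bap^*\om=\sqrt{-1}\tau$ and the reality of $\om$) and the global identity $\int_M(s-\hat{s})\,dv=\int_M|\tau|^2\,dv$ (from (\ref{scal2}), as $d^*\tau$ integrates to zero), the displayed integral becomes
\[
\left[(k-1)+\frac{n-k-1}{n-1}-(n-2)\right]\int_M|\tau|^2\,dv=0.
\]

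The remaining step is just an arithmetic check: the bracket simplifies to $-\tfrac{(n-2)(n-k-1)}{n-1}$, which is strictly negative for every $n\geq 3$ and every $1\leq k\leq n-2$. Hence $\tau\equiv 0$, i.e.\ $\om$ is balanced. For an LCK metric this forces $\pa\om=\tfrac{1}{n-1}\tau\wedge\om=0$, so $d\om=0$ and $\om$ is K\"ahler. The main (very small) obstacle is simply keeping the normalization constants from (\ref{contrk}), Lemma \ref{3rd} and (\ref{eta}) consistent; the genuine content is that the LCK condition collapses all three quantities $|\pa\om|^2$, $|\pa^*\om|^2$, $s-\hat{s}$ to multiples of $|\tau|^2$ whose coefficients do not cancel in the range $k\leq n-2$.
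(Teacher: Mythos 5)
Your proposal is correct and is essentially the paper's own argument: both reduce the integral hypothesis to the identity $\|\pa\om\|^2=\|\pa^*\om\|^2=\|\tau\|^2$ and then use the LCK relation $|\pa\om|^2=\tfrac{1}{n-1}|\tau|^2$ to force $\tau\equiv 0$. The only cosmetic difference is that you route through Lemma \ref{3rd} and then cancel the $s-\hat{s}$ term via $\int_M(s-\hat{s})\,dv=\|\tau\|^2$, whereas the paper applies (\ref{pak}) and Lemma \ref{second} directly (the $\pa\bap\om^{n-1}$ term integrating to zero), which amounts to the same computation.
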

\begin{proof}
By (\ref{pak}) and (\ref{contrk}) (see also equality (5.1) in \cite{LY2}), for $1\leq k \leq n-2$ ,
\begin{align*}
 &\frac{1}{(k+1)!}\int_M\La^{k+1}(\so \pa \bap \om^k)dv\\[.3em]
=&-\dfrac{k}{(n-k-2)!}\int_M\so\pa\om\wg\bap\om\wg\om^{n-3}\\
=&\frac{(n-3)!k}{(n-k-2)!}(\|\pa\om\|^2-\|\pa^*\om\|^2).
 \end{align*}
If (\ref{conk}) is satisfied, then $$\|\pa\om\|^2=\|\pa^*\om\|^2=\|\tau\|^2.$$ As $\om$ is locally conformal K\"ahler,  then by (\ref{eta}) and  (\ref{lck}),
 \begin{align*}
\|\pa\om\|^2=\frac{1}{n-1}\|\tau\|^2.
\end{align*}
 So $\tau=0$ for  $n\geq 3$ and $\om$ is K\"ahler.
\end{proof}

\end{document}